\newtheorem{theorem}{Theorem}[section]
\newtheorem{definition}[theorem]{Definition}
\newtheorem{lemma}[theorem]{Lemma}
\newtheorem{remark}[theorem]{Remark}
\newcommand{\abs}[1]{\lvert#1\rvert}
\newcommand{\norm}[1]{\lVert#1\rVert}
\newcommand{\red}[1]{\textcolor{red}{#1}}
\newcommand{\blue}[1]{\textcolor{blue}{#1}}
\newcommand{\D}{\mathcal{D}}
\newcommand{\G}{\mathcal{G}}
\newcommand{\K}{\mathcal{K}}
\newcommand{\h}{\mathcal{H}}
\newcommand{\R}{\mathbb{R}}
\newcommand{\C}{\mathbb{C}}
\newcommand{\specialref}[2]{(\hyperref[#2]{#1})$_{c_n}$}
\newcommand{\specialrefc}[2]{(\hyperref[#2]{#1})$_{c}$}
\tikzstyle{nodo}=[circle,draw,fill,inner sep=0pt,minimum size=%
\tikzstyle{bnodo}=[circle,draw,fill,inner sep=0pt,minimum size=%
\numberwithin{equation}{section}
\title[Nonlinear Dirac equations on metric graphs]{Nonrelativistic limit of normalized solutions of nonlinear Dirac equations on noncompact metric graphs with localized nonlinearities}
\author[Z. He]{Zhentao He}
\address[Z. He]{\newline\indent
	School of Mathematics
	\newline\indent
	East China University of Science and Technology
	\newline\indent
	Shanghai 200237, PR China }
\email{\href{mailto:hezhentao2001@outlook.com}{hezhentao2001@outlook.com}}
\author[C. Ji]{Chao Ji}
\address[C. Ji]{\newline\indent
	School of Mathematics
	\newline\indent
	East China University of Science and Technology
	\newline\indent
	Shanghai 200237, PR China }
\email{\href{mailto:jichao@ecust.edu.cn}{jichao@ecust.edu.cn}}
\subjclass[2020]{35R02, 81Q35, 58E05, 35Q40}
\date{\today}
\keywords{}
\begin{document}
\keywords{Nonrelativistic limit, Normalized solutions, Nonlinear Dirac equations, Metric graphs.}
\maketitle

\selectlanguage{english}
\begin{center}
    \textbf{ABSTRACT}
\end{center}
\vspace{0.5\baselineskip}

\noindent  In this paper, we study the nonrelativistic limit of normalized solutions for the following nonlinear Dirac equation (NLDE) on noncompact metric graph $\G$ with finitely many edges and a non-empty compact core $\K$
\begin{equation*}
    \D u - \omega u= \chi_\K\abs{u}^{p-2}u,
\end{equation*}
under the constraint  $\int_\G\abs{u}^2\,dx = 1$, where $\D$ is the Dirac operator on $\G$, $u: \G \to \mathbb{C}^2$,  the frequency $\omega \in \mathbb{R}$ is part of the unknowns which arises  as a Lagrange multiplier, $\chi_\K$ is the characteristic function of the compact core $\K$, and $2<p<6$. To the best of our knowledge, this is the first study to  investigate the nonrelativistic limit of normalized solutions to (NLDE) on metric graphs.
\section{Introduction}

Throughout the paper, we consider a connected noncompact metric graph $\mathcal{G} = (\mathrm{V}, \mathrm{E})$, where $\mathrm{E}$ is the set
of edges and $\mathrm{V}$ is the set of vertices. If $\G$ is a metric graph with a finite number of vertices, its compact core $\K$ is defined as the metric subgraph of $\G$ consisting of all the bounded edges. We assume that $\mathcal{G}$ has a finite number of edges and that its compact core $\K$ is non-empty. Each bounded edge $e$ is identified with a closed and bounded interval $I_e = [0, \ell_e]$, $\ell_e > 0$, while each unbounded edge $e$ is identified with a copy of $I_e=\mathbb{R}^+ =  [0, +\infty )$ and is called half-line. A connected metric graph has the natural structure of a locally compact metric space, the metric being given by the shortest distance measured along
the edges of the graph. Moreover, a connected metric graph with a finite number of vertices is compact if and only if it contains no half-lines.

 Quantum graphs (metric graphs equipped with differential operators) arise naturally as simplified models in mathematics, physics, chemistry, and engineering when one considers propagation of waves of various nature through a quasi-one-dimensional (e.g., meso- or nano-scale) system that looks like a thin neighborhood of a graph. For further details on quantum graphs, one may refer to \cite{Be}.
\begin{figure}[H]
	\begin{tikzpicture}[xscale= 0.4,yscale=0.4]
		\draw (-14,3)--(-11,6) ; \draw (0,8)--(-8,6);
		\draw (-11,6)--(-8,6); \draw (-22,7)--(-14,3);
				\draw (0,3)--(-8,3);
		\draw (-11,9)--(-11,6);
		\draw (-14,3)--(-14,6);
		\draw (-11,3)--(-11,6);
		\draw (-8,3)--(-8,6);
		\node at (-14,3) [nodo] {} ; \node at (-14,6) [nodo] {};
		\node at (-11,3) [nodo] {}; \node at (-11,6) [nodo] {} ;
		\node at (-8,3) [nodo] {}; \node at (-8,6) [nodo] {};
		\node at (-11,9) [nodo] {};
  \draw (-22,7.5)  node{$\infty$};
  	\draw (0,8.5)  node{$\infty$};
     	\draw (0,3.5)  node{$\infty$};
   \draw (-11,10.5) circle (1.5);
		\end{tikzpicture}
	\caption{A noncompact graph $\G$ with finitely many edges and a non-empty compact core.}
\end{figure}
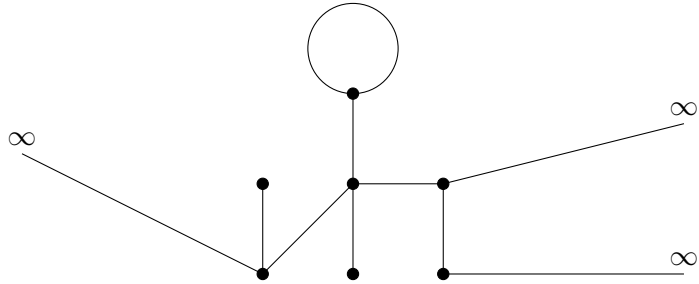

In recent years, a considerable attention has been devoted to the following nonlinear Schr\"odinger equations (NLSE) on metric graphs $\G$
\begin{equation}
\label{eqs}
    -u'' - \lambda u= \abs{u}^{p-2}u.
\end{equation}
In the $H^1$-subcritical ($2 < p < 6$) and $H^1$-critical ($p = 6$) cases, the energy functional is bounded from below and coercive under the mass constraint (requiring the mass to be below a certain threshold when $p=6$), which allows the use of minimization methods to obtain normalized ground states; see \cite{CDS, Do, ABD, Ad2, Ad3, Ad4, Ad5, Do1, NP}. In contrast, for the $H^1$-supercritical case ($p > 6$), the energy functional is no longer bounded from  below on the mass constraint. Moreover, the scaling technique--commonly used in the analysis on $\mathbb{R}^N$---is no longer applicable, and the Pohoz\v{a}ev identity is lacking for metric graphs. To address these challenges, several works have developed new approaches  to obtain normalized solutions; see, for instance, \cite{Bort, Ch, Do0}.

For any noncompact metric graph $\G$ with finitely many edges and a non-empty compact core $\K$, \cite{Gn,No} introduced the following modification of equation \eqref{eqs} which assumes the nonlinearity affects only the non-empty compact core $\K$
\begin{equation}
\label{eqsloc}
    -u'' - \lambda u= \chi_\K\abs{u}^{p-2}u,
\end{equation}
where $\chi_\K$ is the characteristic function of the compact core $\K$. In the $H^1$-subcritical ($2<p<6$) case, the existence, non-existence and multiplicity of normalized solutions to equation \eqref{eqsloc} have been studied in \cite{Se,Se1,Te}.  In the $H^1$-critical ($p=6$) case, the existence or non-existence of normalized ground states to equation \eqref{eqsloc} have been studied in  \cite{Do2, Do3}.  In $H^1$-supercritical ($p>6$) case, the existence of normalized solutions for arbitrary mass was obtained in \cite{Bort} via monotonicity methods, blow-up analysis, and discussions on Morse index type information of Palais-Smale sequences and bound states. Furthermore,  the multiplicity  of normalized solutions to equation \eqref{eqsloc} for $p > 6$ was proved in \cite{Ca} through employing a more general blow-up analysis.

The Dirac operator on metric graphs is denoted by
\begin{equation}
\label{eqdef}
    \D_c := -\imath c\frac{d}{dx} \otimes \sigma_1 +mc^2\otimes\sigma_3
\end{equation}
where $m > 0$ represents the mass of the generic particle of the system and $c > 0$ represents the speed of light, and $\sigma_1, \sigma_3$ are Pauli matrices, i.e.,
\begin{equation}
\label{eqdefsig}
\sigma_1:=\left(\begin{array}{ll}
0 & 1 \\
1 & 0
\end{array}\right) \quad \text { and } \quad \sigma_3:=\left(\begin{array}{cc}
1 & 0 \\
0 & -1
\end{array}\right).
\end{equation}

Consider the following nonlinear Dirac equation (NLDE) on metric graphs $\G$
\begin{equation}\label{eqdinvolvet}
     \imath\partial_t \psi = \D_c\psi + \abs{\psi}^{p-2}\psi,
\end{equation}
where $p > 2$. The physical motivation for such a model primarily comes from solid state physics and nonlinear optics, see \cite{Had,Tr} and references therein.
\cite{Sa} suggests studying the stationary solutions to equation \eqref{eqdinvolvet} on $3$-star graph, that is, solutions in the form $\psi(t, x) = e^{-\imath \omega t}u(x)$,  where $\omega \in \mathbb{R}$ is a parameter. Note that $\psi(t, x) = e^{-\imath \omega t} u(x)$ solves \eqref{eqdinvolvet} if and only if $u$ satisfies the stationary equation
\begin{equation}
\label{eqdexnon}
    \D_c u - \omega u= \abs{u}^{p-2}u.
\end{equation}
In \cite{Bo}, Borrelli, Carlone and Tentarelli considered the case of a localized nonlinearity, that is
\begin{equation}
\label{eqd1}
    \D_c u - \omega u= \chi_\K\abs{u}^{p-2}u.
\end{equation}
More precisely, they proved that for every $\omega \in (-mc^2,mc^2)$, there exists infinitely many (distinct) pairs of bound states (arising as critical points of the corresponding action functional) of frequency $\omega$ of \eqref{eq1}, which converge to the bound states of \eqref{eqs} in the nonrelativistic limit, namely as $c \to +\infty$, for $2<p<6$. For the Cauchy problem associated with equation \eqref{eqdinvolvet} on noncompact metric graphs, and for the existence of solutions to equation \eqref{eqdexnon} on star graphs, see \cite{Bo1}.

So far, there have been very few results concerning normalized solutions of (NLDE) on metric graphs. In \cite{HJ}, He and Ji first studied the existence of normalized solutions to the following (NLDE) on noncompact metric graph $\G$ with finitely many edges and a non-empty compact core $\K$
\begin{equation*}
    \left\{\begin{aligned}
        &\D_c u -\omega  u= a\chi_\K\abs{u}^{p-2}u \quad \text{on every edge } e \in \mathrm{E},\\
        &\int_\G\abs{u}^2\,dx = 1.
    \end{aligned}
    \right.
\end{equation*}
where $a \neq 0$ and $p>2$. In \cite{Ding}, Ding, Yu and Zhao studied the existence of normalized solutions to the following (NLDE) in $\mathbb{R}^3$:
$$
\left\{\begin{aligned}
        &-ic \sum_{k=1}^{3} \alpha_k \partial_k u + mc^2 \beta u -\omega u =P |u|^{p-2} u,\\
        &\int_{R^3}\abs{u}^2\,dx = 1,
    \end{aligned}
    \right.
$$
where $u : \mathbb{R}^3 \to \mathbb{C}^4 $, $\partial_k = \frac{\partial}{\partial x_k}$, $m,c>0$, $P \in (2,\frac{8}{3})$, $P\in L^\infty(\R^3)\backslash\{0\}$ satisfying $P \geq 0$ and some certain assumptions, $\alpha_1, \alpha_2, \alpha_3$, $\beta$ are $4 \times 4$ Pauli-Dirac matrices:
$$
\alpha_k = 
\begin{pmatrix}
0 & \sigma_k \\
\sigma_k & 0
\end{pmatrix},
\quad \beta = 
\begin{pmatrix}
I & 0 \\
0 & -I
\end{pmatrix},
$$
with
$$
\sigma_1 = 
\begin{pmatrix}
0 & 1 \\
1 & 0
\end{pmatrix},
\quad \sigma_2 = 
\begin{pmatrix}
0 & -i \\
i & 0
\end{pmatrix},
\quad \sigma_3 = 
\begin{pmatrix}
1 & 0 \\
0 & -1
\end{pmatrix},
$$
and the frequency $\omega \in \mathbb{R}$ is part of the unknowns which arises  as a Lagrange multiplier.
The authors proved the existence of normalized solutions by combining reduction and perturbation arguments. First, they introduced a perturbation functional $I_{r,\mu}$ associated with the (NLDE) in $\mathbb{R}^3$ and the corresponding reduced functional $J_{r,\mu}$, where $r > 1$ and $\mu>0$ are parameters, and showed that $J_{r,\mu}$ possesses the mountain pass geometry structure. Next, they constructed test-functions from the non-trivial periodic solutions to the following equation
$$
-i\sum_{k=1}^{3} \alpha_k \partial_k u + m \beta u =m u,
$$
whose existence is guaranteed via Fourier transform (see \cite[Lemma 2.2]{Ding}), and used the test-functions to give an upper bound estimate for the minimax level of $J_{r,\mu}$ and the sequence of Lagrange multipliers associated with the Palais-Smale. Third, the authors showed, for $\mu>0$ small enough and  $r>0$ large enough, that there exists a sequence of critical points $\{u_{r,\mu}\}$ of $J_{r,\mu}$ and $\{u_{r,\mu}\}$ possesses a subsequence which converges to a non-trivial solution to (NLDE) in $\mathbb{R}^3$ as $r \to \infty$ and $\mu \to 0^+$. Finally, they established a non-existence result, which enabled them to obtain the existence of normalized solutions.
He and Ji \cite{HJ}, in contrast, overcame the lack of a Fourier transform on general metric graphs—crucial in the construction of test functions in \cite{Ding}—by developing a new approach to build suitable test functions.
This allowed them to estimate the minimax level of the associated functional and to control the sequence of Lagrange multipliers associated with the Cerami sequences.

A natural problem is to study the nonrelativistic limit of normalized solutions for (NLDE) on metric graphs. To the best of our knowledge,  this problem has not been investigated in the existing literature, and the present paper provides the first attempt to address it within the framework of metric graphs. In \cite{Chen}, Chen, Ding, Guo and Wang studied the nonrelativistic limit of normalized solutions for the following (NLDE) in $\R^3$:
$$
\left\{\begin{aligned}
        &-ic \sum_{k=1}^{3} \alpha_k \partial_k u + mc^2 \beta u -\omega u= \Gamma * (K |u|^\kappa) K |u|^{\kappa-2} u + P |u|^{p-2} u,\\
        &\int_{R^3}\abs{u}^2\,dx = 1,
    \end{aligned}
    \right.
$$
where $u : \mathbb{R}^3 \to \mathbb{C}^4 $, $\partial_k = \frac{\partial}{\partial x_k}$, $*$ stands for the convolution, $m,c>0$, $\kappa \in [2,\frac{7}{3})$, $p \in (2,\frac{8}{3}]$, $\Gamma \in C(\R^3\backslash\{0\},(0,+\infty))$ and $K, P \in C^1(\R^3,(0,+\infty))$ satisfying some certain assumptions, $\alpha_1, \alpha_2, \alpha_3$, $\beta$ are $4 \times 4$ Pauli-Dirac matrices, and the frequency $\omega \in \mathbb{R}$ is part of the unknowns which arises  as a Lagrange multiplier. They proved the existence  of normalized solutions $(u_c,\omega_c)$ for (NLDE) in $\R^3$ when the relativistic parameter $c>0$ is sufficiently large and the Lagrange multipliers $(\omega_c)$ satisfying 
\begin{equation}\label{eqlm}
        -\infty<\liminf_{c \to +\infty}(\omega_c -mc^2) \leq \limsup_{c \to +\infty}(\omega_c -mc^2)<0.
\end{equation}
Next, using the Fourier transform, the authors proved that the second components of $(u_c)$ converge to zero, estimated the energy (of the functional associated with the Schr\"odinger equations) of the first components of $(u_c)$, and then, the authors concluded that that the first components of $(u_c)$ converge to $h$ in $H^1(\R^3,\C^2)$ and $(h, \lambda) \in H^1(\R^3,\C^2) \times \R$ is a ground state normalized solution of (NLSE) in $\R^3$. The key step in proving \eqref{eqlm} in \cite{Chen} relies on the following two inequalities:
$$
\min\{mc^2,c\} \norm{u}_{H^\frac{1}{2}(\R^3,\C^4)}^2 \leq \norm{u}_c^2 \leq \max\{mc^2,c\} \norm{u}_{H^\frac{1}{2}(\R^3,\C^4)}^2,
$$
and, for any $q\in (1,\infty)$, there exists $\tau_q>0$ independent of $c$ such that
    $$
    \tau_q\norm{u^\pm}_{L^q}\leq \norm{u}_{L^q}, \quad \forall u \in E_c\cap L^q.
    $$ However, since the proofs of these two inequalities above crucially depend on the Fourier transform, it is difficult to extend them to metric graphs. To overcome these difficulties and establish \eqref{eqlm} on a noncompact metric graph $\G$, we instead employ the inequality 
    \begin{equation}\label{eqsuppu}
        \int_\G \abs{u}^p\,dx \geq \abs{\operatorname{supp}u}^{1-\frac{p}{2}}\left(\int_\G \abs{u}^2\,dx\right)^\frac{p}{2} \quad \text{for all } u \in L^p(\G,\C^2) \text{ with } \operatorname{supp}u \text{ bounded},
    \end{equation}
 where $\abs{\operatorname{supp}u}$ denotes the measure of $\operatorname{supp}u$, together with an inequality derived via spectral theory in Lemma \ref{lemupb}. Nevertheless, due to the reliance on \eqref{eqsuppu}, we can only prove that the first components of $(u_c)$  converge to $g$ in $H^1(\G,\C)$ and $(g,\lambda) \in H^1(\G,\C) \times \R$ is a normalized solution of (NLSE) on $\G$. Whether $(g,\lambda)$ is a ground state normalized solution of (NLSE) on $\G$ remains an open question.

Inspired by \cite{Bo,Chen,HJ,Esteban}, in this paper, we study the nonrelativistic limit of normalized solutions for the following (NLDE) on noncompact metric graph $\G$ with finitely many edges and a non-empty compact core $\K$
\begin{equation}
\label{eq1}
    \D_c u - \omega u= \chi_\K\abs{u}^{p-2}u,
\end{equation}
under the constraint  $\int_\G\abs{u}^2\,dx = 1$, where $2<p<6$, the frequency $\omega \in \mathbb{R}$ is part of the unknowns which arises  as a Lagrange multiplier and $\chi_\K$ is the characteristic function of the compact core $\K$.

Our focus here is on normalized solutions of \eqref{eqd1}, that is, solutions satisfying $\int_\G\abs{u}^2\,dx = \rho$ with $\rho > 0$.  For simplicity, we assume $\rho=1$. With a slight modification, our results can be extended to the other values of $\rho > 0$.

Now, we recall some basic settings of Dirac equations on metric graphs, as introduced in \cite{Bo,Bo1}.
Consistently, a function $u: \G \to \mathbb{C}$ is actually a family of functions $(u_e)$, where $u_e: I_e \to \mathbb{C}$ is the restriction of $u$ to the edge $e$. The usual $L^p$ spaces on the metric graph are defined as follows
as
$$
L^p(\mathcal{G}):=\bigoplus_{e \in \mathrm{E}} L^p(I_e)
$$
with norms
$$
\norm{u}_{L^p(\mathcal{G})}^p := \sum_{e \in \mathrm{E}}\norm{u_e}_{L^p(I_e)}^p, \quad \text { if } p \in[1, \infty), \quad \text { and } \quad \norm{u}_{L^{\infty}(\mathcal{G})}:=\max _{e \in \mathrm{E}}\norm{u_e}_{L^{\infty}(I_e)},
$$
while the Sobolev spaces $H^m(\G)$ are defined as
$$
H^m(\mathcal{G}):=\bigoplus_{e \in \mathrm{E}} H^m(I_e)
$$ with norms

$$
\norm{u}_{H^m(\G)}^2=\sum_{i=0}^m\norm{u^{(i)}}_{L^2(\mathcal{G})}^2.
$$
Accordingly, a spinor $u = (u^1, u^2)^T: \G \to \mathbb{C}^2$ is a family of $2$-spinors
$$
u_e=\binom{u_e^1}{u_e^2}: I_e \longrightarrow \mathbb{C}^2, \quad \forall e \in \mathrm{E},
$$
and thus
$$
L^p(\mathcal{G}, \mathbb{C}^2):=\bigoplus_{e \in \mathrm{E}} L^p(I_e, \mathbb{C}^2),
$$
endowed with the norms
$$
\norm{u}_{L^p(\G, \mathbb{C}^2)}^p := \sum_{e \in \mathrm{E}}\norm{u_e}_{L^p(I_e, \mathbb{C}^2)}^p, \quad \text { if } p \in[1, \infty), \quad \text { and } \quad \norm{u}_{L^{\infty}(\G, \mathbb{C}^2)}:=\max_{e \in \mathrm{E}}\norm{u_e}_{L^{\infty}(I_e, \mathbb{C}^2)},
$$
whereas
$$
H^m(\mathcal{G}, \mathbb{C}^2):=\bigoplus_{e \in \mathrm{E}} H^m(I_e, \mathbb{C}^2)
$$
with the norms
$$
\|u\|_{H^m(\G, \mathbb{C}^2)}^2:=\sum_{e \in \mathrm{E}}\norm{u_e}_{H^m(I_e, \mathbb{C}^2)}^2.
$$
For convenience, we often abbreviate $\norm{u}_{L^p(\G, \mathbb{C}^2)}$ as $\norm{u}_p$ and $\norm{u}_{H^1(\G, \mathbb{C}^2)}$ as $\norm{u}_{H^1}$.

Note that, in the case of (NLSE), the condition that $u$ is continuous on $\G$ is often contained in $H^1(\G)$. However, in the case of (NLDE), we shall keep the conditions of spinors at the vertices separate, as defined in the following.
\begin{definition}[{\cite[Definition 2.3]{Bo}}]
\label{defD}
Let $\mathcal{G}$ be a metric graph and  $m, c>0$. We call the Dirac operator with Kirchhoff-type vertex conditions the operator $\D_c: L^2(\mathcal{G}, \mathbb{C}^2) \rightarrow L^2(\mathcal{G}, \mathbb{C}^2)$ with action
\begin{equation}\label{eqdefde}
    {\D_c}_{\left.\right|_{I_e}} u=\D_{c,e} u_e:=-\imath c \sigma_1 u_e^{\prime}+m c^2 \sigma_3 u_e \quad \forall e \in \mathrm{E},
\end{equation}
$\sigma_1, \sigma_3$ being the matrices defined in \eqref{eqdefsig}, and the domain of $D$ given by
$$
\operatorname{dom}(\D_c):=\{u \in H^1(\mathcal{G}, \mathbb{C}^2): u \text { satisfies \eqref{eqdefdom1} and \eqref{eqdefdom2}}\}
$$
where
\begin{equation}
\label{eqdefdom1}
    u_e^1(\mathrm{v})=u_f^1(\mathrm{v}) \quad \forall e, f \succ \mathrm{v}, \quad \forall \mathrm{v} \in \mathcal{K},
\end{equation}
\begin{equation}
\label{eqdefdom2}
    \sum_{e \succ v} u_e^2(\mathrm{v})_{ \pm}=0 \quad \forall \mathrm{v} \in \mathcal{K},
\end{equation}
"$e \succ v$" meaning that the edge $e$ is incident at the vertex $v$ and $u_e^2(\mathrm{v})_{ \pm}$ stand for $u_e^2(0)$ or $-u_e^2(\ell_e)$ according to whether $x_e$ is equal to $0$ or $\ell_e$ at $v$.
\end{definition}
Note that, it follows from a direct computation that, for all $u \in \operatorname{dom}(\D_c)$,
\begin{equation}\label{eqdcu}
\norm{\D_cu}_2^2=c^2\norm{u'}_2^2+m^2c^4\norm{u}_2^2.
\end{equation}
We now recall some basic properties of the Dirac operator $\D_c$, as discussed in \cite{Bo}. The operator $\D_c$  is self-adjoint on $L^2(\mathcal{G}, \mathbb{C}^2)$, and its spectrum is given by
\begin{equation}
\label{eqsp}
    \sigma{(\D_c)}=( - \infty , - mc^2
] \cup [mc^2, +\infty ).
\end{equation}
Next, we define the associated quadratic form $\mathcal{Q}_{\D_c}$ and its domain $\operatorname{dom}(\mathcal{Q}_{\D_c})$ as follows,
$$
\operatorname{dom}(\mathcal{Q}_{\D_c}):=\left\{u \in L^2(\mathcal{G}, \mathbb{C}^2): \int_{\sigma(\D_c)}|\nu| d \mu_u^{\D_c}(\nu)<\infty\right\}, \quad \mathcal{Q}_{\D_c}(u):=\int_{\sigma(\D_c)} \nu d \mu_u^{\D_c}(\nu),
$$
where $\mu_u^{\D_c}$ denotes the spectral measure associated with $\D_c$ and $u$. Additionally, $\operatorname{dom}(\mathcal{Q}_{\D_c})$ is a closed subspace of
$$
H^{1 / 2}(\mathcal{G}, \mathbb{C}^2):=\bigoplus_{e \in \mathrm{E}} H^{1 / 2}(I_e) \otimes \mathbb{C}^2
$$
with respect to the norm induced by $H^{1 / 2}(\mathcal{G}, \mathbb{C}^2)$. As a consequence of Sobolev embeddings, it follows that
\begin{equation*}
    \operatorname{dom}(\mathcal{Q}_{\D_c}) \hookrightarrow L^p(\G,\mathbb{C}^2) \quad \forall p \in [2, \infty )
\end{equation*}
and that, in addition, the embedding $\operatorname{dom}(\mathcal{Q}_{\D_c}) \hookrightarrow L^p(\K,\mathbb{C}^2)$ is compact, for $p \in [2, \infty )$, due to the compactness
of $\K$. For simplicity, we denote $\operatorname{dom}(\mathcal{Q}_{\D_c})$ by $Y_c$.

It is possible to define the inner product on $Y_c$ by
\begin{equation*}
    (u,v)_c =\Re(\abs{\D_c}^\frac{1}{2}u,\abs{\D_c}^\frac{1}{2}v)_2,
\end{equation*}
where $\Re$ denotes the real part of a complex number, and the induced norm is given by
\begin{equation*}
    \norm{u}_c = (u,u)_c^\frac{1}{2}.
\end{equation*}
From \eqref{eqsp}, for any $u \in Y_c$, we have
$$\norm{u}^2_c \geq mc^2\norm{u}_2^2.$$
According to \eqref{eqsp}, we can define two spectral projectors $P^\pm_c=\frac{1}{2}\left(I-\frac{\D_c}{\abs{\D_c}}\right)$ such that the space $L^2(\G, \mathbb{C}^2)$ possesses the orthogonal decomposition:
\begin{equation}
     L^2(\G, \mathbb{C}^2) = L^-_c \oplus L^+_c,
\end{equation}
where
$$
L^-_c:=\left\{P^-_cu: u \in L^2(\G, \mathbb{C}^2) \right\} \text{ and } L^+_c:=\left\{P^+_cu: u \in L^2(\G, \mathbb{C}^2) \right\},
$$
and $\D_c$ is negative definite on $L^-_c$ and positive definite on $L^+_c$.
Moreover, we can decompose the form domain $Y_c$ as the orthogonal sum of the positive and negative spectral subspaces for the operator $\D_c$,
i.e.,
\begin{equation}
    Y_c = Y^-_c \oplus Y^+_c \text{,\,\, where } Y^\pm_c = L^\pm_c \cap Y_c=P^\pm_c Y_c.
\end{equation}
As a result, every $u \in Y_c$ can be written as
\begin{equation*}
    u = P^+_cu +P^-_cu =: u^+_c + u^-_c.
\end{equation*}

We first prove an existence result when the relativistic parameter $c>0$ is large. 

Denote the longest bounded edge of $\K$ by $e_0$, i.e., $\ell_{e_0} \geq \ell_e$ for all bounded edge $e$. It is well-known that $\sin(\frac{\pi x}{\ell_{e_0}})$ is the eigenfunction corresponding to the first eigenvalue $\lambda_1:=(\frac{\pi}{\ell_{e_0}})^2$ of $-\Delta$ with Dirichlet boundary condition on the bounded interval $[0,\ell_{e_0}]$.
For $2<p<6$, define 
\begin{equation}\label{eqm0}
    m_0(p,\ell_{e_0}):=
\begin{cases}
    0&\text{if }2<p<4,\\
    \frac{p\pi^2}{4}\ell_{e_0}^{\frac{p}{2}-3}\quad &\text{if }p \geq 4.
\end{cases}
\end{equation}
\begin{theorem}\label{th2}
   Let $\G$ be any noncompact metric graph with a non-empty compact core $\K$, $2< p < 6$ and $m >m_0(p,\ell_{e_0})$. Then, there exists $c_0>0$ depending only on $m,p$ and $\G$ such that, for any $c>c_0$, there exists a non-trivial $u_c \in \operatorname{dom}(\D_c)$ (as defined in Definition \ref{defD}) and $\omega_c \in [0, mc^2)$ such that $(u_c,\omega_c)$ is a normalized solution of 
    \begin{equation}\label{eqnlde} 
    \tag*{(NLDE)$_c$}
    \left\{\begin{aligned}
        &\D_c u_c - \omega_c u_c= \chi_\K\abs{u_c}^{p-2}u_c \quad \text{on every edge } e \in \mathrm{E},\\
        &\int_\G\abs{u}^2\,dx = 1.
    \end{aligned}
    \right.
\end{equation}
In addition, $(\omega_c)$ satisfies \eqref{eqlm}.
\end{theorem}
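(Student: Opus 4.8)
The plan is to realise the normalized solutions of (NLDE)$_c$ as constrained critical points of the shifted action
\[
\mathcal F_c(u):=\tfrac12\,\mathcal Q_{\D_c}(u)-\tfrac{mc^2}{2}\norm{u}_2^2-\tfrac1p\int_\K\abs{u}^p\,dx
\]
on the sphere $\mathcal S_c:=\{u\in Y_c:\norm{u}_2^2=1\}$, the Lagrange multiplier being exactly the frequency $\omega_c$. Since $\mathcal F_c$ is strongly indefinite, I would first neutralise the negative spectral directions: for every $v\in Y^+_c$ the map $w\mapsto\mathcal F_c(v+w)$ is strictly concave on $Y^-_c$ (the $mc^2$-shift makes the quadratic part negative definite there while $-\tfrac1p\int_\K\abs{\cdot}^p$ is concave), so one may pass to the reduced functional $\mathcal I_c$ obtained by maximizing over the negative directions compatibly with the constraint; $\mathcal I_c$ is a well-behaved functional on a cone of $Y^+_c$ whose critical points lift to those of $\mathcal F_c$ on $\mathcal S_c$. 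For $4\le p<6$ one checks that $\mathcal I_c$ has a mountain–pass geometry and lets $\gamma_c$ be the minimax value; for $2<p<4$ the Gagliardo–Nirenberg inequality makes $\mathcal I_c$ bounded below and one takes $\gamma_c:=\inf\mathcal I_c$.

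The decisive point is to prove $\gamma_c\le-\delta_0<0$ for all $c>c_0$. Using an inequality of spectral type, $\mathcal Q_{\D_c}(u)-mc^2\norm{u}_2^2\le\tfrac1{2m}\norm{u'}_2^2$ (this should be the content of Lemma \ref{lemupb}), one gets $\mathcal F_c(u)\le\tfrac1{4m}\norm{u'}_2^2-\tfrac1p\int_\K\abs{u}^p$. For $4\le p<6$ I would take as trial spinor the one whose first component is $\sqrt{2/\ell_{e_0}}\,\sin(\pi x/\ell_{e_0})$ on the longest bounded edge $e_0$, with a second component of size $O(1/c)$ and an $O(1/c)$ correction restoring the vertex conditions; then $\norm{u'}_2^2=\pi^2/\ell_{e_0}^2+o(1)$, while \eqref{eqsuppu} with $\operatorname{supp}u\subset e_0$ and $\norm{u}_2=1$ gives $\int_\K\abs{u}^p\ge\ell_{e_0}^{1-p/2}$, so that
\[
\gamma_c\le\frac{\pi^2}{4m\,\ell_{e_0}^2}-\frac1p\,\ell_{e_0}^{1-p/2}+o(1),
\]
which is negative precisely when $m>\tfrac{p\pi^2}{4}\ell_{e_0}^{p/2-3}=m_0(p,\ell_{e_0})$. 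For $2<p<4$ the $e_0$-trial function need not suffice on a small graph, and here I would instead spread a fixed amount of mass along a half-line issuing from a vertex of $\K$: rescaling a fixed profile $\phi$ to $\phi_s$ gives $\norm{\phi_s'}_2^2\sim s^{-2}$ while the part left in $\K$ still obeys $\int_\K\abs{\phi_s}^p\gtrsim s^{-p/2}$ (again via \eqref{eqsuppu}), and since $-p/2>-2$ the $L^p$ term dominates for $s$ large, yielding $\gamma_c<0$ for every $m>0$ and every noncompact $\G$, consistently with $m_0\equiv0$.

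With $\gamma_c<0$ in hand, I would run a Cerami sequence for $\mathcal I_c$ at level $\gamma_c$: boundedness follows from the Cerami condition and the superlinear structure of the localized nonlinearity, and convergence of the nonlinear term from the compactness of $Y_c\hookrightarrow L^p(\K)$; the only potential loss of compactness — mass escaping along a half-line — is excluded because there the equation is linear, $\D_cu=\omega_cu$ with $\omega_c$ in the spectral gap $(-mc^2,mc^2)$, forcing exponential decay. This produces $u_c\in\operatorname{dom}(\D_c)\setminus\{0\}$ and $\omega_c\in\R$ solving (NLDE)$_c$. Testing the equation with $u_c$ and using $\norm{u_c}_2^2=1$ gives $\mathcal Q_{\D_c}(u_c)=\omega_c+\int_\K\abs{u_c}^p$, hence the identity
\[
\omega_c-mc^2=2\gamma_c-\Bigl(1-\tfrac2p\Bigr)\int_\K\abs{u_c}^p .
\]
As $\gamma_c<0$ and the last term is $\le0$, this already forces $\omega_c<mc^2$; combined with a uniform a priori lower bound $\int_\K\abs{u_c}^p\ge\delta_1>0$ (a nontrivial solution cannot lie too close to a solution of the linear gapped equation) it yields the upper half of \eqref{eqlm}, $\limsup_{c\to\infty}(\omega_c-mc^2)<0$.

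It remains to prove $\liminf_{c\to\infty}(\omega_c-mc^2)>-\infty$, and, by the displayed identity with $\gamma_c$ bounded, this amounts to a bound $\int_\K\abs{u_c}^p\le C$ uniform in $c$ — this is the main obstacle, since the two Fourier-analytic inequalities used for the analogous step in \cite{Chen} are unavailable on $\G$ and must be replaced by \eqref{eqsuppu}, Lemma \ref{lemupb}, Gagliardo–Nirenberg and the equation. Concretely, \eqref{eqdcu} together with the equation gives $c^2\norm{u_c'}_{L^2(\K)}^2\le 2mc^2\int_\K\abs{u_c}^p+\int_\K\abs{u_c}^{2p-2}$, and Gagliardo–Nirenberg on $\K$ (with $\norm{u_c}_2\le1$) converts this into a self-improving inequality $s\le C(s^{(p-2)/4}+1)+Cc^{-2}(s^{(p-2)/2}+1)$ for $s:=\norm{u_c'}_{L^2(\K)}^2$. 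For $2<p<4$ both exponents are $<1$, so $s$, and hence $\int_\K\abs{u_c}^p$, is bounded uniformly for $c\ge1$. For $4\le p<6$ the term $c^{-2}s^{(p-2)/2}$ is superlinear, so the inequality only confines $s$ to $[0,\alpha]\cup[\beta_c,\infty)$ with $\alpha$ fixed and $\beta_c\to\infty$; excluding the runaway branch $s\ge\beta_c$ is the delicate part, and there I would use the upper bound $\gamma_c\le-\delta_0$ (which, through $\mathcal F_c(u_c)=\gamma_c$ and the spectral inequality of Lemma \ref{lemupb}, is violated once $\norm{u_c'}_{L^2(\K)}$ is large) together with the exponential decay of $u_c$ on the half-lines. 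Once $\int_\K\abs{u_c}^p\le C$ is known, the displayed identity also forces $\omega_c\ge0$ for $c$ large, so $\omega_c\in[0,mc^2)$, and \eqref{eqlm} follows.
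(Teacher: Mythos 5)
Your overall architecture---neutralizing the negative spectral directions via the Ackermann/Ding reduction, the trial spinor $\bigl(\sqrt{2/\ell_{e_0}}\,\sin(\pi x/\ell_{e_0}),0\bigr)^T$ on the longest bounded edge producing exactly the threshold $m>\tfrac{p\pi^2}{4}\ell_{e_0}^{p/2-3}$ for $p\ge4$, and a profile spread over $\K$ with slowly decaying tails on the half-lines giving $m_0=0$ for $2<p<4$---coincides with the paper's (whose minimax $e_c=\inf_v\sup_{0\le t<\norm{v}_2^{-1}}J_c(tv)$ corresponds to your $\gamma_c$ up to the shift $\tfrac{mc^2}{2}$). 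But two steps have genuine gaps. The first is compactness: you exclude loss of mass along the half-lines on the grounds that the equation is linear there and $\omega_c$ lies in the spectral gap. Exponential decay of the weak limit does not prevent a fixed positive fraction of the $L^2$ mass of the minimizing/Cerami sequence from escaping to infinity; the limit is then a nontrivial solution on a sphere of radius $\nu<1$. This is precisely the dichotomy the paper confronts: Lemma \ref{lemma26} yields either a solution with mass $1$ and multiplier $\omega\in[0,2e_c]$, or a solution with mass $\nu<1$ and multiplier $\omega=0$, and a separate non-existence result (\cite[Lemma A.2]{HJ}) is needed to rule out the second alternative for large $c$. Your sketch has no substitute for that exclusion step (note also that $\omega_c\ge0$ comes for free from this dichotomy, whereas you must derive it a posteriori).

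The second, more serious gap is the uniform bound on $\int_\K\abs{u_c}^p$ (equivalently on $\norm{u_c}_{H^1}$), which drives $\liminf_{c\to\infty}(\omega_c-mc^2)>-\infty$. Your self-improving inequality $s\le C(s^{(p-2)/4}+1)+Cc^{-2}(s^{(p-2)/2}+1)$ acquires the superlinear term $c^{-2}s^{(p-2)/2}$ for $p>4$ because you estimate $\int_\K\abs{u_c}^{2p-2}$ directly by Gagliardo--Nirenberg, and you explicitly leave open how to exclude the runaway branch $s\ge\beta_c$. The paper closes exactly this loop with a different splitting: write $\int_\K\abs{u_c}^{2p-2}\le\norm{u_c}_{L^\infty(\K,\C^2)}^{p-2}\int_\K\abs{u_c}^p$, bound $\norm{u_c}_{L^\infty(\K,\C^2)}^{p-2}\le S_{\infty,\K}^{p-2}\norm{u_c}_{H^1}^{(p-2)/2}$ by Lemma \ref{lemgnsgh1}, and---crucially---bound the remaining factor by $\int_\K\abs{u_c}^p<\tfrac{p}{p-2}\bigl(mc^2-\omega_c\norm{u_c}_2^2\bigr)=O(c^2)$ from the energy identity $I_{0,c}(u_c)<\tfrac{mc^2}{2}$; after dividing by $c^2$ only the exponent $\tfrac{p-2}{2}<2$ survives, which closes the self-improvement for all $2<p<6$ at once. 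Without this (or an equivalent device) your argument covers only $2<p<4$. A minor point: Lemma \ref{lemupb} is not the quadratic-form inequality you guessed but an $L^p(\K)$ bound for the spectral projections $u_c^\pm$ of a $\operatorname{dom}(\D_c)$ function; the inequality you actually invoke, $\mathcal{Q}_{\D_c}(\varphi)-mc^2\norm{\varphi}_2^2\le\tfrac{1}{2m}\norm{\varphi'}_2^2$, is established in the paper only for trial spinors with vanishing second component, which is all that is needed there.
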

Next, we investigate the nonrelativistic limit of \specialrefc{NLDE}{eqnlde}, namely,  as the relativistic parameter $c \to +\infty$, in which case  the nonlinear Schr\"{o}dinger equation on metric graphs is recovered.

The Sobolev space $\tilde{H}^1(\G,\C)$ is defined as $$
\tilde{H}^1(\G,\C):=\left\{u\in C(\G,\mathbb{C}): u' \in L^2(\G,\C) \text{ and } u\in L^2(\G,\C)\right\}.
$$
\begin{theorem}\label{th3}
    Let assumptions of Theorem \ref{th2} hold, and let $(c_n)$ be a real sequence satisfying $0<c_n \to +\infty$ as $n \to \infty$. Then, up to a subsequence, there exists a sequence $\left((u_{c_n},\omega_{c_n})\right)$ such that  $(u_{c_n},\omega_{c_n})\in\operatorname{dom}(\D_{c_n})\times [0,mc_n^2)$ is a normalized solution of \specialref{NLDE}{eqnlde} and
    $$
    u^1_{c_n} \to g, \quad u^2_{c_n} \to 0\quad \text{in }H^1(\G,\mathbb{C}),
    $$
    as $n\to \infty$, where $(g, \lambda) \in \tilde{H}^1(\G,\mathbb{C}) \times \R$ with $\displaystyle \frac{\lambda}{m}=\lim_{n \to \infty}(\omega_{c_n}-mc_n^2)$ is a normalized solution of the following nonlinear Schr\"odinger equation:
        \begin{equation}\label{eqnlse} \tag{NLSE} 
    \left\{
\begin{array}{ll}
        g'' -\lambda g= 2m\chi_\K\abs{g}^{p-2}g \quad & \text{on every } e \in \mathrm{E},\\
       \displaystyle \sum_{e \succ\mathrm{v}}\frac{dg_e}{dx_e}(\mathrm{v})=0 & \text{for every } \mathrm{v} \in \mathrm{V},\\
        \int_\G\abs{g}^2\,dx = 1,
\end{array}
\right.
\end{equation}
with  $\frac{dg_e}{dx_e}(\mathrm{v})$ standing for $g'_e(0)$ or $-g'_e(\ell_e)$ according to whether $x_e$ is equal to $0$ or $\ell_e$ at $\mathrm{v}$.
\end{theorem}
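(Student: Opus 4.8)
The plan is to extract convergence of the normalized solutions $(u_{c_n},\omega_{c_n})$ produced by Theorem~\ref{th2} to a limit $(g,\lambda)$ and then identify the limiting equation as \eqref{eqnlse}. Throughout I write $u_n:=u_{c_n}$, $\omega_n:=\omega_{c_n}$, and decompose $u_n=(u_n^1,u_n^2)^T$. The starting point is the uniform bounds: Theorem~\ref{th2} already gives $\omega_n\in[0,mc_n^2)$ together with \eqref{eqlm}, i.e. $\lambda_n:=\omega_n-mc_n^2$ satisfies $-\infty<\liminf\lambda_n\le\limsup\lambda_n<0$; passing to a subsequence, $\lambda_n\to\lambda<0$. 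Next I would establish a $c_n$-uniform $H^1(\G,\C^2)$-bound on $u_n$. Testing \ref{eqnlde} against $u_n$ and using $\int_\G|u_n|^2=1$ gives $\mathcal Q_{\D_{c_n}}(u_n)-\omega_n=\int_\K|u_n|^p$; combining the two components of the Dirac system (multiply the first equation by $\overline{u_n^2}$, the second by $\overline{u_n^1}$, integrate) one extracts, after using the vertex conditions to kill boundary terms, the standard relations between $c_n\|(u_n^j)'\|_2$, $(mc_n^2\mp\omega_n)\|u_n^j\|_2^2$ and the nonlinear term. Since $mc_n^2-\omega_n=-\lambda_n$ is bounded and $mc_n^2+\omega_n\sim 2mc_n^2\to\infty$, this forces $\|u_n^2\|_2^2=O(c_n^{-2})\to0$ and $\|(u_n^1)'\|_2$, $\|(u_n^2)'\|_2$ bounded (the nonlinearity is controlled by \eqref{eqsuppu} and the Gagliardo--Nirenberg inequality on $\K$, subcritically since $p<6$, so it cannot destabilize the estimate). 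A small nondegeneracy point is that $u_n\not\to0$: the $L^2$ mass is pinned at $1$ and $\|u_n^2\|_2\to0$, so $\|u_n^1\|_2\to1$.

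With these bounds in hand I pass to a further subsequence so that $u_n^1\rightharpoonup g$ weakly in $\tilde H^1(\G,\C)$ and $u_n^2\rightharpoonup w$ weakly in $H^1(\G,\C)$; from $\|u_n^2\|_2\to0$ we get $w=0$, and the weak $H^1$-limit forces $u_n^2\to0$ in $L^\infty_{loc}$ and in $L^2(\K)$. The second Dirac equation reads $-ic_n\,(u_n^1)'+mc_n^2u_n^2-\omega_nu_n^2=\chi_\K|u_n|^{p-2}u_n^2$ on each edge; solving for $u_n^2$ gives $u_n^2=\frac{ic_n(u_n^1)'}{mc_n^2-\omega_n+\chi_\K|u_n|^{p-2}}=\frac{ic_n(u_n^1)'}{-\lambda_n+O(1)}$, which shows $c_n(u_n^1)'=(-\lambda_n)u_n^2+o(1)$ in a suitable sense and, more importantly, lets me substitute into the first equation $-ic_n\,(u_n^2)'+mc_n^2u_n^1-\omega_nu_n^1=\chi_\K|u_n|^{p-2}u_n^1$. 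Differentiating the expression for $u_n^2$ and plugging in, the $mc_n^2$ term combines with $-\omega_n=-mc_n^2-\lambda_n$ to leave, in the limit, $-(u_n^1)''/(\,\cdot\,)$-type terms; carefully, $c_n(u_n^2)'=\frac{c_n^2}{mc_n^2-\omega_n}(u_n^1)''+\dots=\frac{1}{m}(u_n^1)''+o(1)$ away from the vertices, since $\frac{c_n^2}{mc_n^2-\omega_n}\to\frac1m$. Hence on each edge $-\frac{i}{m}(u_n^1)''+(mc_n^2-\omega_n)u_n^1=\chi_\K|u_n|^{p-2}u_n^1+o(1)$, i.e. $-\frac1m g''-\lambda g=\chi_\K|g|^{p-2}g$ in the weak/distributional sense; rescaling matches the coefficient $2m$ in \eqref{eqnlse} once one is careful with the factor coming from $|u_n|^{p-2}=(|u_n^1|^2+|u_n^2|^2)^{(p-2)/2}\to|g|^{p-2}$ and the bookkeeping of the $i$'s (the first component of the Dirac spinor carries the Schr\"odinger profile up to a phase; absorbing the phase yields real coefficients). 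The vertex conditions \eqref{eqdefdom1}--\eqref{eqdefdom2} pass to the limit: continuity of $u_n^1$ is preserved under $H^1$-convergence giving $g\in C(\G)$, and the Kirchhoff condition $\sum_{e\succ\mathrm v}g_e'(\mathrm v)=0$ emerges from \eqref{eqdefdom2} together with the relation $c_n(u_n^1)'\sim(-\lambda_n)u_n^2$ — summing $\sum_{e\succ\mathrm v}(u_n^2)_e(\mathrm v)_\pm=0$ and multiplying by $c_n$ produces $\sum_{e\succ\mathrm v}(u_n^1)_e'(\mathrm v)\to0$ after dividing by the bounded factor $-\lambda_n$.

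To upgrade weak convergence to strong $H^1(\G,\C)$-convergence I would use the equation itself: $u_n^1$ solves an elliptic ODE on each edge with right-hand side bounded in $L^2$ (uniformly, by the $H^1$-bound and subcritical Sobolev embedding), so $u_n^1$ is bounded in $H^2$ on each edge, hence $u_n^1\to g$ in $C^1_{loc}$ and strongly in $H^1$ on the compact core $\K$; on the half-lines one uses that outside $\K$ the equation is linear, $-\frac1m g''-\lambda g=0$ with $\lambda<0$, so solutions decay exponentially and a tail estimate (uniform in $n$, using $\lambda_n$ bounded away from $0$) gives no mass loss at infinity — this is the step that genuinely uses noncompactness and is where I expect to spend the most care. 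Finally the mass constraint passes to the limit: $\int_\G|g|^2=\lim\int_\G|u_n^1|^2=\lim(1-\|u_n^2\|_2^2)=1$, and $g\not\equiv0$, so $(g,\lambda)$ is a normalized solution of \eqref{eqnlse}. The main obstacle, as flagged in the introduction, is precisely the compactness/tightness on the noncompact ends: without a Fourier transform one must control the $L^2$-tails of $u_n^1$ uniformly in $c_n$, and the cleanest route is the exponential-decay bound for the linear equation on the half-lines combined with the a priori $H^1$-bound, but making this uniform in $n$ (so that the limit has full mass $1$ and the convergence is strong rather than merely weak) requires the uniform lower bound $-\limsup\lambda_n>0$ coming from \eqref{eqlm}, which is why Theorem~\ref{th2} is invoked in full strength.
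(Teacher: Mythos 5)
Your overall strategy coincides with the paper's: take the solutions $(u_{c_n},\omega_{c_n})$ produced by Theorem \ref{th2}, use \eqref{eqlm} and a $c$-uniform $H^1$ bound (the paper obtains this from Lemma \ref{lembounded}; you rederive it by testing the equation, which is acceptable) to pass to a subsequential limit, and then reduce the two-component Dirac system to a Schr\"odinger equation, with elliptic regularity on $\K$ and uniform exponential decay on the half-lines supplying the strong convergence and the conservation of mass. The last part is exactly what the paper outsources to \cite[Section 4.2]{Bo}, and you correctly identify that the uniformity of the decay rate on the half-lines is what requires $\limsup_{n}(\omega_{c_n}-mc_n^2)<0$ from \eqref{eqlm}.

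However, the central computation of your sketch carries a sign error that, as written, breaks the derivation of the limit equation. With $\sigma_1,\sigma_3$ as in \eqref{eqdefsig}, the second component of \specialrefc{NLDE}{eqnlde} is $-\imath c_n (u_n^1)' - mc_n^2 u_n^2 - \omega_n u_n^2 = \chi_\K\abs{u_n}^{p-2}u_n^2$ (the minus sign comes from $\sigma_3$), so solving for $u_n^2$ produces the denominator $mc_n^2+\omega_n+\chi_\K\abs{u_n}^{p-2}\sim 2mc_n^2$, not $mc_n^2-\omega_n=O(1)$ as in your formula. Your relation $u_n^2=\imath c_n(u_n^1)'/(-\lambda_n+O(1))$ is inconsistent with your own (correct) estimate $\norm{u_n^2}_2=O(c_n^{-1})$ and with $g\not\equiv 0$, since it would force $(u_n^1)'\to 0$; the correct relation is $(u_n^1)'\approx 2\imath m c_n u_n^2$. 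Likewise, the quantity $\frac{c_n^2}{mc_n^2-\omega_n}$ diverges under \eqref{eqlm} (its denominator stays bounded), so it cannot converge to $\frac{1}{m}$; what converges is $\frac{c_n^2}{mc_n^2+\omega_n}\to\frac{1}{2m}$, and this is precisely the origin of the coefficient $2m$ in \eqref{eqnlse} --- it is not a matter of absorbing a phase. The same correction is needed for the Kirchhoff condition: one multiplies $\sum_{e\succ\mathrm{v}}(u_n^2)_e(\mathrm{v})_\pm=0$ by a factor of order $2\imath mc_n$, not by $c_n/(-\lambda_n)$. Once these signs are fixed, your substitution becomes exactly the computation of \cite[Section 4.2]{Bo} that the paper invokes, and the rest of your outline (tail estimates on the half-lines, passage to the limit in the mass constraint) is sound.
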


In \cite{Chen}, for the nonlinear Dirac equations in $\R^3$, since the reduction method in \cite{Chen} used there can only be applied in the $H^{\frac{1}{2}}$-critical/subcritical cases (i.e., $p \in (2,\frac{3}{8}]$ in $\R^3$), the nonrelativistic limit result was established only for these cases. In contrast, in Theorem \ref{th3}, we prove the nonrelativistic limit result on noncompact metric graphs for the $H^{\frac{1}{2}}$-subcritical cases (i.e., $p \in (2,4)$ for metric graphs) for all $m>0$, and for the $H^{\frac{1}{2}}$-critical/supercritical but $H^1$-subcritical cases (i.e., $p \in [4,6)$ for metric graphs) for all $m>m_0(p,\ell_{e_0})$, where $m_0(p,\ell_{e_0})$ is defined in \eqref{eqm0}.

The rest of the paper is organized as follows. In Section \ref{sect2}, we present a Gagliardo-Nirenberg-Sobolev inequality on metric graphs, introduce the perturbation functional and present some useful lemmas. In Section \ref{sect3}, we give the proofs of Theorems \ref{th2} and \ref{th3}.  In Section \ref{sectfr}, we show that the existence result of normalized solution to \eqref{eqnlse} on $\G$ in Theorem \ref{th3} is compatible with the related researches and provide a possibility to prove $(g,\lambda)$, the normalized solution of \eqref{eqnlse} on $\G$ obtained in Theorem \ref{th3}, is a ground state normalized solution of \eqref{eqnlse}.
\section{Preliminaries}\label{sect2}
In this section, we establish a Gagliardo-Nirenberg-Sobolev inequality on metric graphs, introduce the perturbation functional, and provide some fundamental results that will be essential for proving our main theorems.
\subsection{Gagliardo-Nirenberg-Sobolev inequality}
The following lemma is an immediate corollary of $Y_c \subset H^{\frac{1}{2}}(\G,\mathbb{C}^2)$ and the Gagliardo-Nirenberg-Sobolev inequalities on bounded or unbounded intervals, see \cite[Theorem 1.1]{Ha2}. In the following, we abbreviate $\norm{u}_{H^1(\G, \mathbb{C}^2)}$ as $\norm{u}_{H^1}$.
\begin{lemma}\label{lemgnsg}
Let $p \geq 2$, there exist two positive constants $C_{p,\K}$ and $C_{p,\G}$ that depend only on $c$, $m$, $p$ and $\G$ such that
\[
    \int_{\K} \abs{u}^p\,dx \leq C_{p,\K} \norm{u}^{p-2}_c\norm{u}_2^{2}, \quad \forall u \in Y_c,
\]
and
\[
 \int_{\K} \abs{u}^p\,dx \leq \int_{\G} \abs{u}^p\,dx \leq C_{p,\G} \norm{u}^{p-2}_c\norm{u}_2^{2}, \quad \forall u \in Y_c.
\]
\end{lemma}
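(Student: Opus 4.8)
The plan is to obtain the two inequalities as a direct consequence of the one‑dimensional Gagliardo--Nirenberg--Sobolev inequality applied edge by edge, together with the continuous embedding $Y_c\hookrightarrow H^{1/2}(\G,\mathbb{C}^2)$ recalled just above (following \cite{Bo}). First I would record the scalar one‑dimensional estimate: for $p\ge 2$ and an interval $I$ that is either bounded or a half‑line, \cite[Theorem 1.1]{Ha2} provides a constant $\kappa_{p,I}>0$ with
\[
\int_I|f|^p\,dx\le \kappa_{p,I}\,\|f\|_{H^{1/2}(I)}^{p-2}\,\|f\|_{L^2(I)}^2,\qquad \forall f\in H^{1/2}(I).
\]
Since $\G$ has finitely many edges, I set $\kappa_p:=\max_{e\in\mathrm E}\kappa_{p,I_e}$, a constant depending only on $p$ and $\G$, so that the inequality holds with the single constant $\kappa_p$ on every edge at once. (One can in fact check that $\kappa_{p,I}$ may be chosen independent of $I$ via the scale invariance of $[\,\cdot\,]_{\dot H^{1/2}}$ in dimension one, but this is not needed here.)

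Next I would pass to the whole graph and to spinors. Summing over edges and bounding each $\|f_e\|_{H^{1/2}(I_e)}^{p-2}\le \|f\|_{H^{1/2}(\G)}^{p-2}$ (it is a single summand of $\|f\|_{H^{1/2}(\G)}^2$, and $p-2\ge 0$), one gets for scalar $f\in H^{1/2}(\G)$
\[
\int_\G|f|^p\,dx=\sum_{e\in\mathrm E}\int_{I_e}|f_e|^p\,dx\le \kappa_p\,\|f\|_{H^{1/2}(\G)}^{p-2}\sum_{e\in\mathrm E}\|f_e\|_{L^2(I_e)}^2=\kappa_p\,\|f\|_{H^{1/2}(\G)}^{p-2}\|f\|_{L^2(\G)}^2.
\]
For a spinor $u=(u^1,u^2)^T$ I would use the elementary convexity bound $(a+b)^{p/2}\le 2^{p/2-1}(a^{p/2}+b^{p/2})$ with $a=|u^1|^2$, $b=|u^2|^2$, apply the previous line to $u^1$ and to $u^2$ separately, and recombine using $\|u^j\|_{H^{1/2}(\G)}\le\|u\|_{H^{1/2}(\G,\mathbb{C}^2)}$ together with $\|u^1\|_2^2+\|u^2\|_2^2=\|u\|_2^2$, obtaining
\[
\int_\G|u|^p\,dx\le 2^{p/2-1}\kappa_p\,\|u\|_{H^{1/2}(\G,\mathbb{C}^2)}^{p-2}\|u\|_2^2 .
\]

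Finally I would invoke the property recalled before the lemma, namely that $Y_c=\operatorname{dom}(\mathcal{Q}_{\D_c})$ embeds continuously in $H^{1/2}(\G,\mathbb{C}^2)$: there is $\Lambda=\Lambda(c,m,\G)>0$ with $\|u\|_{H^{1/2}(\G,\mathbb{C}^2)}\le\Lambda\|u\|_c$ for every $u\in Y_c$ (equivalently, on $Y_c$ the norms $\|\cdot\|_c$ and $\|\cdot\|_{H^{1/2}}$ are equivalent, which follows from $\|u\|_c^2\ge mc^2\|u\|_2^2$, \eqref{eqdcu} and interpolation, cf. \cite{Bo}). Substituting into the last display yields the $\G$‑inequality with $C_{p,\G}=2^{p/2-1}\kappa_p\Lambda^{p-2}$, which depends only on $c,m,p,\G$; the $\K$‑inequality then follows from $\int_\K|u|^p\le\int_\G|u|^p$ by taking $C_{p,\K}=C_{p,\G}$, or, for a sharper constant, by running the previous two steps only over the bounded edges constituting $\K$ and then applying the embedding. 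There is no genuine obstacle in this argument; the only two points worth stating explicitly are the finiteness (hence uniform boundedness) of the one‑dimensional constants $\kappa_{p,I_e}$, which is where we use that $\G$ has finitely many edges, and the norm comparison $\|\cdot\|_{H^{1/2}}\lesssim\|\cdot\|_c$ on $Y_c$, which is borrowed from the spectral‑theoretic setup of \cite{Bo}.
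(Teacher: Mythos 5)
Your proposal is correct and follows exactly the route the paper indicates: the paper gives no detailed proof, stating only that the lemma is an immediate corollary of the embedding $Y_c\subset H^{1/2}(\G,\mathbb{C}^2)$ and the one-dimensional Gagliardo--Nirenberg--Sobolev inequalities of \cite[Theorem 1.1]{Ha2} applied edge by edge, which is precisely what you carry out (with the edge summation, the spinor convexity bound, and the norm comparison $\|\cdot\|_{H^{1/2}}\lesssim\|\cdot\|_c$ made explicit).
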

\begin{lemma}\label{lemgnsgh1}
Let $p \geq 2$. Then, there exist a positive constant $S_{p,\K}$ that depend only on $p$ and $\G$ such that
\[
    \int_{\K} \abs{u}^p\,dx \leq S_{p,\K} \norm{u}_{H^1}^{\frac{p}{2}-1}\norm{u}_2^{\frac{p}{2}+1}, \quad \forall u \in H^1(\G, \mathbb{C}^2),
\]
and there exist a positive constant $S_{\infty,\K}$ that depend only on $\G$ such that
\[
 \norm{u}_{L^\infty(\K, \mathbb{C}^2)} \leq S_{\infty,\K} \norm{u}_{H^1}^\frac{1}{2}\norm{u}_2^\frac{1}{2}, \quad \forall u \in H^1(\G, \mathbb{C}^2).
\]
\end{lemma}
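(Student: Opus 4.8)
The plan is to reduce everything to a single bounded edge of $\K$, establish the $L^\infty$ bound first, and then deduce the $L^p$ bound from it by interpolating against $L^2(\K)$; since $\K$ has only finitely many (bounded) edges and the argument never involves the Dirac operator, the constants will automatically depend only on $p$ and on the edge lengths of $\K$, hence only on $\G$. Fix $u\in H^1(\G,\C^2)$ and an edge $e\in\K$, identified with $I_e=[0,\ell_e]$, and set $\phi_e(x):=\abs{u_e(x)}^2=\abs{u_e^1(x)}^2+\abs{u_e^2(x)}^2$. Since $u_e\in H^1(I_e,\C^2)$ is absolutely continuous, so is $\phi_e$, with $\phi_e'=2\Re\langle u_e,u_e'\rangle_{\C^2}$ and hence $\abs{\phi_e'}\le 2\abs{u_e}\abs{u_e'}$ a.e. For all $x,y\in[0,\ell_e]$ the fundamental theorem of calculus and Cauchy--Schwarz give
\[
\phi_e(x)=\phi_e(y)+\int_y^x\phi_e'(t)\,dt\le \phi_e(y)+2\norm{u_e}_{L^2(I_e)}\norm{u_e'}_{L^2(I_e)},
\]
and averaging over $y\in[0,\ell_e]$ yields $\abs{u_e(x)}^2\le \ell_e^{-1}\norm{u_e}_{L^2(I_e)}^2+2\norm{u_e}_{L^2(I_e)}\norm{u_e'}_{L^2(I_e)}$ for every $x$.

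Next I would pass from edgewise to global norms. Since $\norm{u_e}_{L^2(I_e)}\le\norm{u}_2$ and $\max\{\norm{u_e}_{L^2(I_e)},\norm{u_e'}_{L^2(I_e)}\}\le\norm{u}_{H^1}$ (each being controlled by a single summand in the definition of the norms on $\G$), the previous bound gives $\norm{u_e}_{L^\infty(I_e,\C^2)}^2\le(\ell_e^{-1}+2)\norm{u}_{H^1}\norm{u}_2$. Taking the maximum over the finitely many $e\in\K$ and putting $S_{\infty,\K}:=\big(\max_{e\in\K}(\ell_e^{-1}+2)\big)^{1/2}$, which depends only on $\G$, proves the second inequality. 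For the first, let $p\ge2$ and write $\abs{u}^p=\abs{u}^{p-2}\abs{u}^2$ on $\K$; then
\[
\int_\K\abs{u}^p\,dx\le\norm{u}_{L^\infty(\K,\C^2)}^{p-2}\int_\K\abs{u}^2\,dx\le S_{\infty,\K}^{\,p-2}\norm{u}_{H^1}^{\frac{p-2}{2}}\norm{u}_2^{\frac{p-2}{2}}\norm{u}_2^2=S_{\infty,\K}^{\,p-2}\norm{u}_{H^1}^{\frac p2-1}\norm{u}_2^{\frac p2+1},
\]
so the claim holds with $S_{p,\K}:=S_{\infty,\K}^{p-2}$, depending only on $p$ and $\G$. (One could alternatively derive the first inequality directly from the one-dimensional Gagliardo--Nirenberg inequality on each bounded edge, as in Lemma~\ref{lemgnsg}.)

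I do not expect a genuine obstacle here; the statement is an elementary interpolation/Sobolev inequality on a finite union of bounded intervals. The only points requiring a little care are: to work throughout with the Euclidean norm $\abs{u_e}$ of the spinor (rather than its scalar components), so that no vertex conditions are needed; to use the averaging-in-$y$ device in the first step, since on a bounded interval $\norm{u_e'}_{L^2}$ alone does not control $\norm{u_e}_{L^\infty}$ (the constant functions); and to observe that, because the whole argument uses only the ambient $H^1(\G,\C^2)$ and $L^2(\G,\C^2)$ structure and never refers to $\D_c$, the resulting constants are, as required, independent of $c$ and $m$.
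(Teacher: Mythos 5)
Your proof is correct. For the record, the paper does not actually prove Lemma \ref{lemgnsgh1}: it is stated without argument, implicitly as a standard consequence of the one-dimensional Gagliardo--Nirenberg inequality on bounded intervals (the same source, \cite[Theorem 1.1]{Ha2}, invoked just above for Lemma \ref{lemgnsg}), summed over the finitely many edges of $\K$. What you have written is a self-contained elementary derivation of exactly that: the averaging-in-$y$ step correctly handles the fact that $\norm{u_e'}_{L^2}$ alone cannot control $\norm{u_e}_{L^\infty}$ on a bounded edge, the passage from edgewise to global norms via $\norm{u_e}_{L^2(I_e)}\le\norm{u}_2$ and $\norm{u_e'}_{L^2(I_e)}\le\norm{u}_{H^1}$ is legitimate, and deducing the $L^p$ bound by writing $\abs{u}^p=\abs{u}^{p-2}\abs{u}^2$ and interpolating against $\int_\K\abs{u}^2\le\norm{u}_2^2$ gives precisely the exponents $\frac p2-1$ and $\frac p2+1$ claimed (and degenerates harmlessly to $S_{2,\K}=1$ at $p=2$). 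Your observation that the constants depend only on the edge lengths of $\K$, hence only on $\G$ and $p$ and not on $c$ or $m$, is exactly the point of this lemma as it is used later (e.g.\ in Lemma \ref{lemupb}), so making it explicit is a genuine improvement over the paper's silence.
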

\subsection{The Perturbation Functional}\label{subsecper} 
We define the energy functional $I_{\omega,c}: Y_c \to \mathbb{R}$ associated with equation \eqref{eq1} by
$$
I_{\omega,c}(u) := \frac{1}{2}\norm{u^{+}_c}^{2}_c-\frac{1}{2}\norm{u^{-}_c}^{2}_c-\frac{\omega}{2} \int_{\G}\abs{u}^{2} \,dx-\Psi(u)
$$
for $u=(u^{+}_c+u^{-}_c) \in Y_c$, where
$$
\Psi(u) := \frac{1}{p} \int_{\K}|u|^{p}\,dx.
$$
It follows from standard arguments that $I_{\omega} \in C^{2}(Y_c, \mathbb{R})$. Moreover, by \cite[Proposition 3.1]{Bo}, if $u \in Y_c$ is a critical point of $I_{\omega}$, then $u$ is a solution of problem \eqref{eq1}, and belonging to $\operatorname{dom}(\D_c) \subset H^1(\G, \mathbb{C}^2)$. Thus, $u_e \in C^1(I_e,\mathbb{C}^{2})$ for all $e \in \mathrm{E}$ (see, e.g., \cite[Remark 6 in Chapter 8]{Ha}).

Define $J_c: Y^+_c \rightarrow \mathbb{R}$ by
$$
J_c(v)=I_{0,c}(v+h_c(v))=\max \{I_{0,c}(v+w): w \in Y^-_c\},
$$
where $h_c \in C^1\left(Y^+_c, Y^-_c\right)$ and the critical points of $J_c$ and $I_{0,c}$ are in one-to-one correspondence via the injective map $u \rightarrow u+h_c(u)$ from $Y^+_c$ into $Y_c$, as shown in \cite{Ding,DingX,Ac}.
\section{Proof of Theorem \ref{th2}}\label{sect3}
Define
\begin{equation}\label{eqcssup}
e_c:=\inf_{v \in Y^+_c}\sup _{0 \leq t<|v|_{2}^{-1}} J_c(t v)
\end{equation} 
and
$$S_c(\mu):=\left\{u \in Y_c :\norm{u}^2_2=\mu \right\}.$$
By combining Lemma 2.2 and (2.4) in \cite{HJ}, we know that $e_c >0$. Moreover, by combining the proof of Lemma 2.6 and (2.4) in \cite{HJ}, we obtain the following lemma. 
\begin{lemma}\label{lemma26}
If $e_c\in (0, \frac{mc^2}{2})$, then, there exists $u \in \operatorname{dom}(\D_c)$ such that $I_{0,c}(u)\leq e_c$, and
     \begin{enumerate}[label=\rm(\roman*)]
        \item either $u$ is a critical point of $I_{0,c}$ constrained on $S_c(1)$ with Lagrange multiplier $\omega \in [0,2e_c]$.
\item  or $u$ is a critical point of $I_{0,c}$ constrained on $S_c(\nu)$ for some $0<\nu < 1$ with Lagrange multiplier $\omega =0$.
\end{enumerate}
\end{lemma}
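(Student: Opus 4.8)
The plan is to rework the argument behind \cite[Lemma 2.6]{HJ} (together with \cite[(2.4)]{HJ}) in the present notation, keeping track of the two places where the hypotheses enter: the bound $e_c\in(0,\tfrac{mc^2}{2})$ and the confinement of the nonlinearity to the compact core $\K$. \emph{First}, I would record the minimax geometry. One has $J_c(0)=I_{0,c}(0)=0$ and, by Lemma \ref{lemgnsg} together with the lower bound $\norm{w}_c^2\ge mc^2\norm{w}_2^2$,
\[
J_c(tv)\ \ge\ I_{0,c}(tv)\ \ge\ \tfrac12\,t^2\norm{v}_c^2-\tfrac{C_{p,\K}}{p}\,t^p\,\norm{v}_c^{p-2}\norm{v}_2^2 ,
\]
so each path-value $\sup_{0\le t<\norm{v}_2^{-1}}J_c(tv)$ is strictly positive; combined with the already known fact $e_c>0$, this endows $e_c$ with the structure of a mountain-pass level for $J_c$ along the half-lines $t\mapsto tv$, truncated by the mass constraint $\norm{tv}_2<1$. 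Running the deformation argument of \cite[Lemma 2.6]{HJ} at this level, I expect a minimizing family of directions $v_n\in Y^+_c$, maximizers $t_n$ of $t\mapsto J_c(tv_n)$ on $[0,\norm{v_n}_2^{-1})$, the points $u_n:=t_nv_n+h_c(t_nv_n)\in Y_c$ and reals $\omega_n$, with $I_{0,c}(u_n)\to e_c$ and $(1+\norm{u_n}_c)\,\norm{I_{0,c}'(u_n)-\omega_n(u_n,\cdot)_2}_{Y_c^*}\to0$. The role of $e_c<\tfrac{mc^2}{2}$ is to keep this construction away from the boundary of the mass ball: it should force, in the limit, either that the maximizing $t$ is attained with $t\norm{v}_2<1$ strictly, in which case $\omega_n$ can be taken to vanish, or that the constraint is active, in which case the inequality $\norm{u}_2^2\le1$ approached from inside makes $\omega_n\ge0$; in both cases $\limsup_n\omega_n<mc^2$.

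\emph{Second}, I would establish boundedness and compactness. From the Cerami relation one gets $\langle I_{0,c}'(u_n),u_n\rangle-\omega_n\norm{u_n}_2^2\to0$, so, using the identity
\[
2I_{0,c}(u_n)-\langle I_{0,c}'(u_n),u_n\rangle=\frac{p-2}{p}\int_\K\abs{u_n}^p ,
\]
together with $\omega_n\in[0,mc^2)$ and $I_{0,c}(u_n)\to e_c$, one controls $\int_\K\abs{u_n}^p$; testing $I_{0,c}'(u_n)-\omega_n(u_n,\cdot)_2$ against $P^+_cu_n$ and $P^-_cu_n$ and invoking Lemma \ref{lemgnsg} then shows that $(u_n)$ is bounded in $Y_c$ and that $\omega_n\to\omega\in[0,mc^2)$. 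Up to a subsequence, $u_n\rightharpoonup u$ in $Y_c$ and $\norm{u_n}_2^2\to\nu\in(0,1]$. Since the nonlinearity is supported in $\K$ and the embedding $Y_c\hookrightarrow L^p(\K,\C^2)$ is compact, $u_n\to u$ in $L^p(\K,\C^2)$ and $\chi_\K\abs{u_n}^{p-2}u_n\to\chi_\K\abs{u}^{p-2}u$ strongly; feeding this into the projected equations $(\abs{\D_c}\mp\omega_n)P^{\pm}_cu_n=\pm P^{\pm}_c\!\left(\chi_\K\abs{u_n}^{p-2}u_n\right)+o(1)$ and using that $\abs{\D_c}\mp\omega_n$ is boundedly invertible on $Y^{\pm}_c$ when $\abs{\omega_n}<mc^2$, one upgrades the convergence to $u_n\to u$ strongly in $Y_c$. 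Hence $I_{0,c}(u)\le e_c$, $u\neq0$ (because $e_c>0$), $\D_cu-\omega u=\chi_\K\abs{u}^{p-2}u$ on every edge, and $u\in\operatorname{dom}(\D_c)\subset H^1(\G,\C^2)$ by \cite[Proposition 3.1]{Bo}.

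\emph{Third}, I would read off the dichotomy. Set $\nu:=\norm{u}_2^2\in(0,1]$. If $\nu<1$, the mass constraint has been inactive along the construction, so $\omega=0$, i.e.\ $\D_cu=\chi_\K\abs{u}^{p-2}u$, and $u$ is a critical point of $I_{0,c}$ constrained on $S_c(\nu)$ with Lagrange multiplier $0$: this is alternative (ii). If $\nu=1$, then $u\in S_c(1)$ is a constrained critical point with multiplier $\omega$; choosing the test spinor $u$ in $\langle I_{0,c}'(u),\varphi\rangle=\omega\,\Re(u,\varphi)_2$ gives $\langle I_{0,c}'(u),u\rangle=\omega$, so the identity above becomes $\tfrac{p-2}{p}\int_\K\abs{u}^p=2I_{0,c}(u)-\omega$, which forces $\omega\le 2I_{0,c}(u)\le 2e_c$; the lower bound $\omega\ge0$ is inherited from $\omega_n\ge0$, equivalently from the monotonicity in the mass parameter of the truncated minimax value, which is the point at which \cite[(2.4)]{HJ} is used: this is alternative (i). In either case $u\in\operatorname{dom}(\D_c)$ and $I_{0,c}(u)\le e_c$, which is the claim.

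The hard part will be the first step. Since $I_{0,c}$ is strongly indefinite — its quadratic part being $\tfrac12\norm{u^+_c}_c^2-\tfrac12\norm{u^-_c}_c^2$ — one cannot minimax directly on $S_c(1)$ and must pass through the reduced functional $J_c$ on $Y^+_c$; but the $L^2$-sphere and the spectral splitting $Y_c=Y^+_c\oplus Y^-_c$ are not compatible, so keeping simultaneous control of the mass constraint and of the sign and size of the Lagrange multiplier all along the deformation is delicate, and it is exactly there that $e_c<\tfrac{mc^2}{2}$ enters and the dichotomy (i)/(ii) is produced. A secondary difficulty is the loss of compactness caused by the half-lines of $\G$, which can be bypassed here only because the nonlinearity lives on the compact core, through the compact embedding $Y_c\hookrightarrow L^p(\K,\C^2)$.
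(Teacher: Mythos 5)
Your outline reproduces the architecture that the paper itself relies on (its ``proof'' of this lemma is a one-line deferral to the proof of Lemma 2.6 and (2.4) of \cite{HJ}): reduce to $J_c$ on $Y_c^+$, minimax over the truncated rays $t\mapsto tv$ with $0\le t<\norm{v}_2^{-1}$, extract a Cerami-type sequence with approximate multipliers, recover compactness from the compact embedding $Y_c\hookrightarrow L^p(\K,\mathbb{C}^2)$ together with the invertibility of $\D_c-\omega$ for $\abs{\omega}<mc^2$, and read the dichotomy off the limiting mass. Your third step is complete and correct: on $S_c(1)$ one has $\omega=\langle I_{0,c}'(u),u\rangle=2I_{0,c}(u)-\tfrac{p-2}{p}\int_\K\abs{u}^p\,dx\le 2I_{0,c}(u)\le 2e_c$, which is exactly how the upper bound on the multiplier is obtained, and the compactness step is sound once $\omega<mc^2$ is in hand.

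The genuine gap is your first step, which is the actual content of the lemma. You obtain the constrained Cerami sequence with $\omega_n\ge0$, and the interior-maximum-versus-active-constraint dichotomy, by ``running the deformation argument of \cite[Lemma 2.6]{HJ}'' --- i.e.\ by invoking the result you are asked to prove. Nothing in the write-up establishes (a) that a sequence $(u_n,\omega_n)$ with $I_{0,c}(u_n)\to e_c$, $\norm{u_n}_2^2\le1$, $\omega_n\ge0$ and $I_{0,c}'(u_n)-\omega_n(u_n,\cdot)_2\to0$ exists, nor (b) why a maximizer of $t\mapsto J_c(tv)$ attained strictly inside $[0,\norm{v}_2^{-1})$ forces the multiplier to vanish in the limit; both are asserted with ``I expect'' and ``it should force.'' Moreover, the role you assign to the hypothesis $e_c<\tfrac{mc^2}{2}$ (``keeping the construction away from the boundary of the mass ball'') is not the mechanism --- in alternative (i) the constraint \emph{is} active. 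Its actual functions are: it yields $\omega\le 2e_c<mc^2$, so the multiplier stays in the spectral gap, which is what legitimizes your invertibility/strong-convergence step; and it rules out the vanishing alternative, since if $u_n\to0$ in $L^p(\K,\mathbb{C}^2)$ then testing the approximate equation with $P_c^{\pm}u_n$ gives $\norm{u_n^-}_c\to0$ and $\bigl(1-\tfrac{2e_c}{mc^2}\bigr)\norm{u_n^+}_c^2\le o(1)$, whence $I_{0,c}(u_n)\to0\neq e_c$. Relatedly, in your second step you take $\nu\in(0,1]$ and claim $u\neq0$ ``because $e_c>0$'' without supplying this argument. As written, the proposal is a correct map of the proof's skeleton, but the construction that produces the two alternatives --- the heart of the lemma --- is missing.
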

The following lemma provides an upper bound for $\int_{\K}|u^+_c|^{p}\,dx$,  which plays a crucial role in the proof of Theorem \ref{th2}. 
\begin{lemma}\label{lemupb}
    Let $c \geq \frac{1}{m}$. Then, for any $u \in \operatorname{dom}(\operatorname{\D_c})$, we have
    $$
    \int_{\K}|u^\pm_c|^{p}\,dx \leq S_{p,\K} (mc)^{\frac{p}{2}-1}\norm{u}_{H^1}^{\frac{p}{2}-1}\norm{u^\pm_c}_2^{\frac{p}{2}+1}.
    $$
\end{lemma}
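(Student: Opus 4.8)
The plan is to obtain the estimate by applying Lemma~\ref{lemgnsgh1} to the spectral component $u^\pm_c$ in place of $u$; the whole argument then reduces to the operator-level inequality $\norm{u^\pm_c}_{H^1}\le mc\,\norm{u}_{H^1}$, valid whenever $c\ge 1/m$.

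First I would note that the spectral projectors $P^\pm_c$, being bounded Borel functions of $\D_c$, commute with $\D_c$ by the functional calculus; hence for $u\in\operatorname{dom}(\D_c)$ one has $u^\pm_c=P^\pm_c u\in\operatorname{dom}(\D_c)\subset H^1(\G,\mathbb{C}^2)$ and $\D_c u^\pm_c=P^\pm_c(\D_c u)$. Since $P^\pm_c$ is an orthogonal projection on $L^2(\G,\mathbb{C}^2)$, this yields at once $\norm{\D_c u^\pm_c}_2\le\norm{\D_c u}_2$.

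Next I would apply \eqref{eqdcu} to $u^\pm_c\in\operatorname{dom}(\D_c)$, which gives $c^2\norm{(u^\pm_c)'}_2^2=\norm{\D_c u^\pm_c}_2^2-m^2c^4\norm{u^\pm_c}_2^2$, so that
$$
\norm{u^\pm_c}_{H^1}^2=\norm{(u^\pm_c)'}_2^2+\norm{u^\pm_c}_2^2=\frac{1}{c^2}\norm{\D_c u^\pm_c}_2^2-(m^2c^2-1)\norm{u^\pm_c}_2^2\le\frac{1}{c^2}\norm{\D_c u^\pm_c}_2^2,
$$
the last step using $c\ge 1/m$ to discard the non-positive term. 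Combining with $\norm{\D_c u^\pm_c}_2\le\norm{\D_c u}_2$ and invoking \eqref{eqdcu} once more, this time for $u$ itself, together with $\norm{\D_c u}_2^2=c^2\norm{u'}_2^2+m^2c^4\norm{u}_2^2\le m^2c^4\norm{u}_{H^1}^2$ (valid since $c^2\le m^2c^4$ when $c\ge 1/m$), I get $\norm{u^\pm_c}_{H^1}^2\le m^2c^2\norm{u}_{H^1}^2$.

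Finally, Lemma~\ref{lemgnsgh1} applied to $u^\pm_c$ reads $\int_\K|u^\pm_c|^p\,dx\le S_{p,\K}\norm{u^\pm_c}_{H^1}^{p/2-1}\norm{u^\pm_c}_2^{p/2+1}$, and substituting $\norm{u^\pm_c}_{H^1}\le mc\,\norm{u}_{H^1}$ into the factor $\norm{u^\pm_c}_{H^1}^{p/2-1}$ (legitimate because $p/2-1\ge0$) produces exactly the claimed inequality. The only delicate point is the commutation of $P^\pm_c$ with $\D_c$ and the consequent inclusion $P^\pm_c(\operatorname{dom}(\D_c))\subset\operatorname{dom}(\D_c)$, but this is standard spectral theory; everything else is a bookkeeping computation with \eqref{eqdcu} and the contractivity of orthogonal projections, so I do not anticipate any genuine obstacle.
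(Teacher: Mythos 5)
Your proposal is correct and follows essentially the same route as the paper: both reduce the claim to $\norm{u^\pm_c}_{H^1}\le mc\,\norm{u}_{H^1}$ via \eqref{eqdcu} and the bound $\norm{\D_c u^\pm_c}_2\le\norm{\D_c u}_2$, then apply Lemma~\ref{lemgnsgh1} to $u^\pm_c$. The only cosmetic difference is that the paper justifies $\norm{\D_c u^\pm_c}_2\le\norm{\D_c u}_2$ through the Pythagorean identity $\norm{\D_c u^+_c}_2^2+\norm{\D_c u^-_c}_2^2=\norm{\D_c u}_2^2$, whereas you use the contractivity of the commuting orthogonal projections $P^\pm_c$; these are equivalent.
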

\begin{proof}
    Since $\D_c$ is self-adjoint, we know that 
    $$
    \operatorname{dom}(\operatorname{\D_c})=\left\{u \in L^2(\mathcal{G}, \mathbb{C}^2): \int_{\sigma(\D_c)}|\nu|^2 d \mu_u^{\D_c}(\nu)<\infty\right\},
    $$
    hence, $u^\pm_c \in \operatorname{dom}(\operatorname{\D_c})$ and $\norm{\D_cu^+_c}^2_2+\norm{\D_cu^-_c}^2_2=\norm{\D_cu}^2_2$.
    Then, by $c \geq \frac{1}{m}$ and \eqref{eqdcu}, we obtain
    $$
    \norm{u^\pm_c}_{H^1}^2=\norm{(u^\pm_c)'}_2^2+\norm{u^\pm_c}_2^2 \leq \frac{1}{c^2}\norm{D_cu^\pm_c}_2^2\leq \frac{1}{c^2}\norm{D_cu}_2^2 \leq m^2c^2\norm{u}_{H^1}^2.
    $$
    Thus, by Lemma \ref{lemgnsgh1}, we conclude that 
    $$
    \int_{\K}|u^\pm_c|^{p}\,dx \leq S_{p,\K}\norm{u^\pm_c}_{H^1}^{\frac{p}{2}-1}\norm{u^\pm_c}_2^{\frac{p}{2}+1} \leq S_{p,\K} (mc)^{\frac{p}{2}-1}\norm{u}_{H^1}^{\frac{p}{2}-1}\norm{u^\pm_c}_2^{\frac{p}{2}+1}.
    $$
\end{proof}
\begin{remark}
We note that Lemma \ref{lemupb} straightforward in the case of $\mathbb{R}^3$, because for any $q\in (1,\infty)$, there exists $\tau_q>0$ independent of $c$ such that
    $$
    \tau_q\norm{u^\pm}_{L^q}\leq \norm{u}_{L^q}, \quad \forall u \in E_c\cap L^q,
    $$
    see, e.g., \cite[Lemma 2.1]{Chen}. However, since the proof of this result relies on the Fourier transform, it is difficult to extend it to metric graphs.
\end{remark}
We first prove that the condition $e_c < mc^2$ in Lemma \ref{lemupb} holds for sufficiently large $c>0$ and for all $m > m_0(p,\ell_{e_0})$.
\begin{lemma}\label{lemcin}
    $$e_c \leq \frac{mc^2}{2} + \frac{\pi^2}{4m\ell_{e_0}^2}-\frac{1}{p}\ell_{e_0}^{1-\frac{p}{2}} + o(1),\quad \text{as } c \to +\infty.$$
Moreover, if $2<p<4$, then there exists a constant $C_0(m,p,\G)>0$ depending only on $m,p$ and $\G$ such that $e_c\leq mc^2-C_0(m,p,\G)+o(1)$ as $c \to +\infty$.
\end{lemma}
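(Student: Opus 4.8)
The plan is to bound $e_c$ from above by evaluating the minimax \eqref{eqcssup} on a single, carefully chosen vector of $Y^+_c$, obtained by spectrally projecting the first Dirichlet mode of the longest core edge $e_0$. Set $\lambda_1:=\pi^2/\ell_{e_0}^2$, let $\phi:=\sqrt{2/\ell_{e_0}}\,\sin(\pi x/\ell_{e_0})$ on $e_0$, extended by $0$ to the rest of $\G$, and put $u:=(\phi,0)^{T}$, $v:=P^+_c u\in Y^+_c$, $\tilde v:=v/\norm{v}_2$ (so $\norm{\tilde v}_2=1$). Note $u\in\operatorname{dom}(\D_c)$, since $\phi$ vanishes at $\partial e_0$ (so \eqref{eqdefdom1} holds trivially) and the second component is $0$ (so \eqref{eqdefdom2} holds trivially). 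I will bound $\sup_{0\le t<1}J_c(t\tilde v)$ by estimating separately the quadratic form $\tfrac12\norm{\tilde v}_c^2$, the nonlinear term $-\tfrac1p\int_\K\abs{\tilde v}^p$, and the error produced by the reduction map $h_c$.

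For the quadratic form I use $\D_c^2=c^2(-\partial_x^2+m^2c^2)$ on $\operatorname{dom}(\D_c)$, hence $\abs{\D_c}=c(-\partial_x^2+m^2c^2)^{1/2}$, together with the identity $2\norm{P^+_c u}_c^2=\norm{u}_c^2+\mathcal Q_{\D_c}(u)$ (immediate from $\norm{u}_c^2=\norm{u^+_c}_c^2+\norm{u^-_c}_c^2$ and $\mathcal Q_{\D_c}(u)=\norm{u^+_c}_c^2-\norm{u^-_c}_c^2$). A direct computation gives $\mathcal Q_{\D_c}(u)=(\D_c u,u)_2=mc^2\norm{\phi}_2^2=mc^2$, and by functional calculus, expanding the symbol $(\xi^2+m^2c^2)^{1/2}=mc+\tfrac{\xi^2}{2mc}+O(c^{-3})$ and using $-\phi''=\lambda_1\phi$ on the interior of $e_0$, one gets $\norm{u}_c^2=(\abs{\D_c}u,u)_2=mc^2+\tfrac{\lambda_1}{2m}+o(1)$; similarly $\norm{v}_2^2=\Re(P^+_c u,u)_2=\tfrac12\bigl(\norm{u}_2^2+\Re(\tfrac{\D_c}{\abs{\D_c}}u,u)_2\bigr)=1-\tfrac{\lambda_1}{4m^2c^2}+o(c^{-2})$. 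Dividing, $\norm{\tilde v}_c^2=\norm{v}_c^2/\norm{v}_2^2=mc^2+\tfrac{\lambda_1}{2m}+o(1)$, i.e. $\tfrac12\norm{\tilde v}_c^2=\tfrac{mc^2}{2}+\tfrac{\pi^2}{4m\ell_{e_0}^2}+o(1)$. I expect this step to be the main obstacle: obtaining the sharp constant $\tfrac{\pi^2}{4m\ell_{e_0}^2}$ requires controlling the $O((mc)^{-1})$ boundary layer created by the nonlocal projector $P^+_c$ near $\partial e_0$ (so that $\phi$ may be treated as a genuine eigenfunction of $-\partial_x^2$ up to $o(1)$) and keeping track of the renormalization.

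For the nonlinear term, since $u\equiv 0$ off $e_0$ one has $v=-P^-_c u$ there, hence $\int_{\G\setminus e_0}\abs v^2\le\norm{P^-_c u}_2^2=1-\norm v_2^2=O(c^{-2})$, so $\int_{e_0}\abs{\tilde v}^2=1+o(1)$; applying \eqref{eqsuppu} to $\tilde v\mathbf 1_{e_0}$ (support of measure $\le\ell_{e_0}$, and $1-\tfrac p2<0$) yields $\int_\K\abs{\tilde v}^p\ge\int_{e_0}\abs{\tilde v}^p\ge\ell_{e_0}^{1-p/2}\bigl(\int_{e_0}\abs{\tilde v}^2\bigr)^{p/2}=\ell_{e_0}^{1-p/2}+o(1)$. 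For the reduction error, $J_c(w)=\tfrac12\norm w_c^2-\min_{\zeta\in Y^-_c}\bigl(\tfrac12\norm\zeta_c^2+\tfrac1p\int_\K\abs{w+\zeta}^p\bigr)$ for $w\in Y^+_c$; using convexity of $s\mapsto s^p$ ($\abs{w+\zeta}^p\ge\abs w^p+p\abs w^{p-2}\Re\langle w,\zeta\rangle$), the bound $\norm\zeta_{L^p(\K)}^p\le C_{p,\K}\norm\zeta_c^{p-2}\norm\zeta_2^2\le C_{p,\K}(mc^2)^{-1}\norm\zeta_c^p$ from Lemma~\ref{lemgnsg} and $\norm\zeta_c^2\ge mc^2\norm\zeta_2^2$, one obtains $\bigl|\int_\K\abs w^{p-2}\Re\langle w,\zeta\rangle\bigr|\le\varepsilon_c\norm\zeta_c$ with $\varepsilon_c=\bigl(C_{p,\K}(mc^2)^{-1}\bigr)^{1/p}\bigl(\int_\K\abs w^p\bigr)^{(p-1)/p}$, whence $J_c(w)\le I_{0,c}(w)+\tfrac12\varepsilon_c^2$. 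For $w=t\tilde v$, $t\in[0,1)$, this gives $\varepsilon_c=o(1)$ uniformly; and since $I_{0,c}(t\tilde v)=\tfrac{t^2}2\norm{\tilde v}_c^2-\tfrac{t^p}p\int_\K\abs{\tilde v}^p$ with $\norm{\tilde v}_c^2\to\infty$ and $\int_\K\abs{\tilde v}^p$ bounded, the unconstrained maximizer $t_\ast=(\norm{\tilde v}_c^2/\int_\K\abs{\tilde v}^p)^{1/(p-2)}\to\infty$ exceeds $1$, so $t\mapsto I_{0,c}(t\tilde v)$ is increasing on $[0,1)$ and $\sup_{0\le t<1}J_c(t\tilde v)\le\tfrac12\norm{\tilde v}_c^2-\tfrac1p\int_\K\abs{\tilde v}^p+o(1)$. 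Combining with the two previous paragraphs and $e_c\le\sup_{0\le t<1}J_c(t\tilde v)$ proves the first assertion.

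For the second assertion ($2<p<4$), I would rerun the same scheme with a test function spread over the whole core rather than concentrated on $e_0$: fix $\beta>0$ small, let $\phi_0\equiv\abs\K^{-1/2}$ on $\K$ and $\phi_0(s)=\abs\K^{-1/2}e^{-\beta s}$ on each half-line (slightly mollified near the contact vertices so $\phi_0\in C^\infty$), set $u_0:=(\phi_0,0)^{T}\in\operatorname{dom}(\D_c)$, and project and normalize as before to get $\tilde v_0\in Y^+_c$. The computation of the second paragraph, with $-\partial_x^2$ no longer reduced to an eigenvalue, still gives $\tfrac12\norm{\tilde v_0}_c^2=\tfrac{mc^2}{2}+\tfrac{\norm{\phi_0'}_2^2}{4m\norm{\phi_0}_2^2}+o(1)$, while the constant being the equality case of \eqref{eqsuppu} on $\K$ gives $\int_\K\abs{\tilde v_0}^p=\abs\K^{1-p/2}\bigl(\int_\K\abs{\tilde v_0}^2\bigr)^{p/2}+o(1)$. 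A direct computation yields $\norm{\phi_0'}_2^2=a\beta$ and $\norm{\phi_0}_2^2=1+a/\beta$ with $a=a(\G)>0$, and $\int_\K\abs{\tilde v_0}^2=(1+a/\beta)^{-1}+o(1)$, so the net correction to $\tfrac{mc^2}{2}$ is, up to $o(1)$, $\tfrac{a\beta^2}{4m(\beta+a)}-\tfrac1p\abs\K^{1-p/2}\bigl(\tfrac{\beta}{\beta+a}\bigr)^{p/2}\sim c_2\beta^2-c_1\beta^{p/2}$ as $\beta\to0$, with $c_1,c_2>0$. Since $p<4$ forces $p/2<2$, the negative term dominates for $\beta$ small, so choosing $\beta=\beta(m,p,\G)$ small enough makes this correction equal to a fixed negative number $-C_0(m,p,\G)$, giving $e_c\le\tfrac{mc^2}{2}-C_0+o(1)\le mc^2-C_0+o(1)$. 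The point of the restriction $2<p<4$ is exactly that this last step only closes when $p/2<2$; for $p\ge4$ one is forced onto the $e_0$-based construction, which is why the extra hypothesis $m>m_0(p,\ell_{e_0})$ appears (see \eqref{eqm0}).
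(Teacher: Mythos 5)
Your overall strategy coincides with the paper's: test the minimax on the spectral projection of the first Dirichlet mode of the longest core edge, lower-bound the nonlinear term via the support inequality \eqref{eqsuppu}, control the reduction map $h_c$, and, for $2<p<4$, switch to a test function spread over the whole core with a small decay parameter on the half-lines (your exponential tail versus the paper's linear cutoff is immaterial). The constants you land on ($\tfrac{\pi^2}{4m\ell_{e_0}^2}$, $\ell_{e_0}^{1-p/2}$, the $\beta^2$ vs. $\beta^{p/2}$ competition) all match. However, the step you yourself flag as ``the main obstacle'' is a genuine gap: you compute $\norm{u}_c^2=(\abs{\D_c}u,u)_2=mc^2+\tfrac{\lambda_1}{2m}+o(1)$ (and likewise $\norm{v}_2^2$) by expanding the symbol $(\xi^2+m^2c^2)^{1/2}$ and using $-\phi''=\lambda_1\phi$. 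On a metric graph there is no Fourier transform to justify this expansion, and $u=(\phi,0)^T$ does not lie in $\operatorname{dom}(\D_c^2)$ (the second component of $\D_c u$ is $-\imath c\phi'$, which violates \eqref{eqdefdom2} at the endpoints of $e_0$), so $\phi$ cannot be treated as an eigenfunction of the operator whose square root you are expanding. The paper avoids functional calculus entirely: since $(\D_c-mc^2I)\varphi=(0,-\imath c(\varphi^1)')^T$ is $L^2$-orthogonal to $\varphi$, pairing it with $\varphi_c^{\pm}$ and using Cauchy--Schwarz together with $\norm{w}_c^2\ge mc^2\norm{w}_2^2$ gives $\norm{\varphi_c^-}_c^2+mc^2\norm{\varphi_c^-}_2^2\le \tfrac{b}{2m}$ and $\norm{\varphi_c^+}_c^2-mc^2\norm{\varphi_c^+}_2^2\le\tfrac{b}{2m}$, which is all that is needed. (Within your own framework the gap is also closable in one line: $\norm{u}_c^2=(\abs{\D_c}u,u)_2\le\norm{\D_cu}_2\norm{u}_2=(m^2c^4+c^2\lambda_1)^{1/2}\le mc^2+\tfrac{\lambda_1}{2m}$ by \eqref{eqdcu}, and then $\norm{u_c^-}_c^2=\tfrac12(\norm{u}_c^2-mc^2)\le\tfrac{\lambda_1}{4m}$ gives $\norm{u_c^-}_2^2=O(c^{-2})$ — but as written your argument does not do this.)

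A second, smaller gap is in your reduction-error estimate: the bound $\varepsilon_c=\bigl(C_{p,\K}(mc^2)^{-1}\bigr)^{1/p}\bigl(\int_\K\abs{w}^p\bigr)^{(p-1)/p}=o(1)$ uses the constant $C_{p,\K}$ of Lemma~\ref{lemgnsg}, which the paper explicitly allows to depend on $c$; without a $c$-uniform version of that inequality (precisely the kind of estimate the paper says is unavailable on graphs without Fourier analysis), you cannot conclude $\varepsilon_c\to0$. The paper sidesteps this by controlling $h_c$ only in $L^2$: from $I_{0,c}(tv+h_c(tv))\ge I_{0,c}(tv)$ one gets $\tfrac{p}{2}\norm{h_c(tv)}_c^2\le\int_\K\abs{tv}^p$, hence $\norm{h_c(tv)}_{L^2}^2\le\tfrac{2}{pmc^2}\int_\K\abs{tv}^p=O(c^{-2})$, and then the lower bound \eqref{eqsuppu} on $I_{e_0}$ (which only sees $L^2$ norms) absorbs $h_c$ as an $o(1)$ perturbation. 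You should replace your convexity/duality argument by this $L^2$-only control, or else prove a $c$-uniform Gagliardo--Nirenberg constant. Everything else in your proposal, including the analysis of the sup over $t\in[0,1)$ and the $2<p<4$ construction, is sound.
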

\begin{proof}
Define $\varphi_{e_0}^1:[0,\ell_{e_0}]\to \mathbb{R}$ by $\varphi_{e_0}^1(x)=\sqrt{\frac{2}{\ell_{e_0}}}\sin(\frac{\pi x}{\ell_{e_0}})$.
Define $\varphi^1: \G \to \mathbb{C}$ by
$$\varphi^1(x) = \begin{cases}
    \varphi_{e_0}^1(x)& \text{for }x \in e_0,\\
    0 & \text{else},
\end{cases}$$
and  $\varphi: \G \to \mathbb{C}^2$ by
$$\varphi = \binom{\varphi^1}{0}.$$
Then  $\varphi \in \operatorname{dom}(\D_c)$, $\norm{\varphi}_2=1$ and $b:=\norm{\varphi^\prime}_{2}^2=\norm{\left(\varphi^1\right)^\prime}_{L^2(\G)}^2=(\frac{\pi}{\ell_{e_0}})^2$.

We first estimate  $J_c\left(\frac{\varphi^+_c}{\norm{\varphi^+_c}_2}\right)$.
It is clear that $\varphi \in \operatorname{dom}(\D)$, $(\D- mc^2I)\varphi = \left(0, -\imath c \left(\varphi^1\right)^\prime\right)^T$ and $((\D- mc^2I)\varphi, \varphi)_2 = 0$. Thus, we obtain
\[
c\norm{\left(\varphi^1\right)^\prime}_{L^2(\G)}\norm{\varphi^-_c}_2\geq \abs{((\D- mc^2I)\varphi,\varphi^-_c)_2} = \abs{((\D- mc^2I)\varphi^-_c,\varphi^-_c)_2} = \norm{\varphi^-_c}^2_v + mc^2 \norm{\varphi^-_c}^2_2 \geq 2mc^2\norm{\varphi^-_c}_2^2.
\]
Hence,
\begin{equation}\label{eqphi-}
\norm{\varphi^-_c}^2_c + mc^2 \norm{\varphi^-_c}^2_2 \leq \abs{((\D- mc^2I)\varphi,\varphi^-_c)_2} \leq \frac{1}{2m}\norm{\left(\varphi^1\right)^\prime}_{L^2(\G)}^2=\frac{b}{2m}.
\end{equation}
Since $((\D- mI)\varphi,\varphi^+_c)_2 + ((\D- mI)\varphi,\varphi^-_c)_2 = ((\D- mI)\varphi, \varphi)_2 = 0$, we obtain
\begin{equation}\label{eqphi+}
\norm{\varphi^+_c}^2_c - mc^2 \norm{\varphi^+_c}^2_2 = ((\D- mc^2I)\varphi^+_c,\varphi^+_c)_2 = ((\D- mc^2I)\varphi,\varphi^+_c)_2 \leq \frac{b}{2m}.
\end{equation}
Since $\norm{\varphi}_2^2 =1$ and $\varphi$ is supported on $I_{e_0}$, \eqref{eqphi-} implies
\begin{equation}\label{eqvarphi+c}
\norm{\varphi^+_c}^2_c= \norm{\varphi}^2_c-\norm{\varphi^-_c}^2_c\geq mc^2\norm{\varphi}^2_2-\frac{1}{2m}\norm{\left(\varphi^1\right)^\prime}_{L^2(\G)}^2=mc^2-\frac{b}{2m},
\end{equation}
\begin{equation}\label{eqvarphi+c2}
\left\lvert1-\norm{\varphi^+_c}_2^2\right\rvert=\left\lvert\norm{\varphi}_2^2-\norm{\varphi^+_c}_2^2\right\rvert =  \norm{\varphi^-_c}_2^2\leq \frac{b}{2m^2c^2},
\end{equation}
and
\begin{equation}\label{eqvarphi+ck}
\left\lvert1-\norm{\varphi^+_c}_{L^2(I_{e_0},\mathbb{C}^2)}\right\rvert=\left\lvert\norm{\varphi}_{L^2(I_{e_0},\mathbb{C}^2)}-\norm{\varphi^+_c}_{L^2(I_{e_0},\mathbb{C}^2)}\right\rvert \leq \norm{\varphi^-_c}_{L^2(I_{e_0},\mathbb{C}^2)}\leq \norm{\varphi^-_c}_2\leq \frac{\sqrt{b}}{\sqrt{2}mc}.
\end{equation}
Thus, by Lemma \ref{lemupb} and \eqref{eqphi-}, we have
\begin{equation}\label{eqphi+k}
\begin{aligned}
        \left\lvert{\norm{\varphi}_{L^p(\K,\mathbb{C}^2)}-\norm{\varphi^+_c}_{L^p(\K,\mathbb{C}^2)}}\right\rvert &\leq \norm{\varphi^-_c}_{L^p(\K,\mathbb{C}^2)}\\
    &\leq S_{p,\K}^\frac{1}{p} (mc)^{\frac{1}{2}-\frac{1}{p}}\norm{\varphi}_{H^1}^{\frac{1}{2}-\frac{1}{p}}\norm{\varphi^-_c}_2^{\frac{1}{2}+\frac{1}{p}}\\
    &\leq S_{p,\K}^\frac{1}{p} (mc)^{-\frac{2}{p}}(1+b)^{\frac{1}{4}-\frac{1}{2p}}(\frac{b}{2})^{\frac{1}{4}+\frac{1}{2p}}\\
    &\leq o(1),
\end{aligned}
\end{equation}
as $c \to +\infty$.

 Then, for $t > 0$, from the definition of $h_c$, one has
$$
I_{0,c}(t\varphi^+_c+h_c(t\varphi^+_c)) \geq I_{0,c}(t\varphi^+_c),
$$
which implies that,
\begin{equation}\label{eqhphi}
    \int_{\K}\abs{t\varphi^+_c+h_c(t\varphi^+_c)}^{p}\,dx + \frac{p}{2}\norm{h_c(t\varphi^+_c)}^{2} \leq \int_{\K}\abs{t\varphi^+_c}^{p} \,dx.
\end{equation}
Hence, by \eqref{eqvarphi+c2} and \eqref{eqphi+k}, for $t=\frac{1}{\norm{\varphi^+_c}_2}$, we obtain
\begin{equation}\label{eqhtphi+}
\begin{aligned}
    \left\lVert h\left(\frac{\varphi^+_c}{\norm{\varphi^+_c}_2}\right)\right\rVert_{L^2(I_{e_0},\mathbb{C}^2)}^2&\leq \left\lVert h\left(\frac{\varphi^+_c}{\norm{\varphi^+_c}_2}\right)\right\rVert^{2}_2\\
    &\leq \frac{1}{mc^2}\left\lVert h\left(\frac{\varphi^+_c}{\norm{\varphi^+_c}_2}\right)\right\rVert^{2} \\
    &\leq  \frac{1}{mc^2}{\norm{\varphi^+_c}_2^{-p}}\int_{\K}\abs{\varphi^+_c}^{p} \,dx\\
    &\leq o(1),
\end{aligned}
\end{equation}
as $c \to +\infty$.
From  H\"older's inequality, \eqref{eqvarphi+c2}, \eqref{eqvarphi+ck} and \eqref{eqhtphi+}, it yields that
\begin{equation}\label{eqphi+h}\begin{aligned}
        \int_{\K}\left\lvert\frac{\varphi^+_c}{\norm{\varphi^+_c}_2} + h\left(\frac{\varphi^+_c}{\norm{\varphi^+_c}_2}\right)\right\rvert^{p}\,dx 
        &\geq  \int_{I_{e_0}}\left\lvert\frac{\varphi^+_c}{\norm{\varphi^+_c}_2} + h\left(\frac{\varphi^+_c}{\norm{\varphi^+_c}_2}\right)\right\rvert^{p}\,dx\\
        &\geq \ell_{e_0}^{1-\frac{p}{2}}\left\lVert\frac{\varphi^+_c}{\norm{\varphi^+_c}_2}+h\left(\frac{\varphi^+_c}{\norm{\varphi^+_c}_2}\right)\right\rVert_{L^2(I_{e_0},\mathbb{C}^2)}^p \\
    &\geq \ell_{e_0}^{1-\frac{p}{2}}\left\lvert \frac{\norm{\varphi^+_c}_{L^2(I_{e_0},\mathbb{C}^2)}}{\norm{\varphi^+_c}_2}-\left\lVert h\left(\frac{\varphi^+_c}{\norm{\varphi^+_c}_2}\right)\right\rVert_{L^2(I_{e_0},\mathbb{C}^2)}\right\rvert^p\\
    &\geq \ell_{e_0}^{1-\frac{p}{2}}\left\lvert \frac{\norm{\varphi^+_c}_{L^2(I_{e_0},\mathbb{C}^2)}}{\norm{\varphi^+_c}_2}+o(1)\right\rvert^p\\
    &\geq  \ell_{e_0}^{1-\frac{p}{2}} + o(1),
    \end{aligned}
\end{equation}
as $c \to +\infty$. Thus, using \eqref{eqphi+}, \eqref{eqvarphi+c2} and \eqref{eqphi+h}, we conclude that\red{,}
    \begin{equation}\label{eqjsmallm}
            \begin{aligned}
                J_c\left(\frac{\varphi^+_c}{\norm{\varphi^+_c}_2}\right) =& 
                \frac{\norm{\varphi^+_c}^2_c}{2\norm{\varphi^+_c}^2_2}-\frac{1}{2}\left\lVert h\left(\frac{\varphi^+_c}{\norm{\varphi^+_c}_2}\right) \right\rVert^2_c -  \frac{1}{p}\int_{\K}\left\lvert\frac{\varphi^+_c}{\norm{\varphi^+_c}_2} + h\left(\frac{\varphi^+_c}{\norm{\varphi^+_c}_2}\right)\right\rvert^{p}\,dx\\
                \leq & \frac{mc^2}{2} + \frac{\norm{\varphi^+_c}^2_c - mc^2\norm{\varphi^+_c}^2_2}{2\norm{\varphi^+_c}^2_2}  -\frac{1}{p}\ell_{e_0}^{1-\frac{p}{2}} + o(1)\\
                \leq &\frac{mc^2}{2} + \frac{b}{4m}-\frac{1}{p}\ell_{e_0}^{1-\frac{p}{2}} + o(1)\\
    \end{aligned}
    \end{equation}
as $c \to +\infty$.

Next, define $g(t):=J_c(t\varphi^+_c)$. Then, from \eqref{eqvarphi+c}, \eqref{eqphi+k} and the definition of $J_c$, for $0 \leq t \leq\norm{\varphi^+_c}_{2} ^{-1}$ and sufficiently small $b>0$, we have
$$
\begin{aligned}
g'(t) & =\frac{1}{t}\langle J_c^{\prime}(t\varphi^+_c), t\varphi^+_c\rangle=\frac{1}{t}\langle I_{0,c}'(t\varphi^+_c+h_c(t\varphi^+_c)), t \varphi^+_c+h'(t\varphi^+_c) t\varphi^+_c\rangle \\
& =\frac{1}{t}\langle I_{0,c}^{\prime}(t\varphi^+_c+h_c(t\varphi^+_c)), t\varphi^+_c + h_c(t\varphi^+_c)\rangle \\
& =\frac{1}{t}\left[\norm{t\varphi^+_c}^2_c-\norm{h_c(t\varphi^+_c)}^2_c-\int_{\K} \abs{t\varphi^+_c + h_c(t\varphi^+_c)}^p \,dx\right]\\
& \geq \frac{1}{t}\left[\norm{t\varphi^+_c}^{2}-\int_{\K}\abs{t\varphi^+_c}^{p}\,dx+\frac{p-2}{2}\norm{h_c(t\varphi^+_c)}^{2}\right] \\
& \geq t\norm{\varphi^+_c}^2_c-t^{p-1}\int_{\K} \abs{\varphi^+_c}^{p} \,dx \\
& \geq t\left(\norm{\varphi^+_c}^2_c-\norm{\varphi^+_c}_{2}^{2-p}\int_{\K} \abs{\varphi^+_c}^{p} \,dx\right)\\
& \geq t\left(mc^2-\norm{\varphi}_{L^p(\K,\mathbb{C}^2)}^p+o(1)\right),
\end{aligned}
$$
as $c \to +\infty$, which implies that, $g(t)$ is increasing for $0 \leq t \leq\norm{\varphi^+_c}_{2} ^{-1}$ for all $c>0$ large enough. Then, by \eqref{eqcssup} and \eqref{eqjsmallm}, we conclude that,
\begin{equation}\label{eqec}
    e_c= \sup _{0 \leq t \leq\norm{\varphi^+_c}_{2} ^{-1}} J_c(t\varphi^+_c) \leq J_c(\frac{\varphi^+_c}{\norm{\varphi^+_c}_2}) \leq \frac{mc^2}{2} + \frac{b}{4m}-\frac{1}{p}\ell_{e_0}^{1-\frac{p}{2}} + o(1),\quad \text{as } c \to +\infty.
\end{equation}

Now, let $2<p<4$, the half-lines of the graph $\G$ be denoted by $\h_1, \h_2,...,\h_N$, and the total length of the graph $\K$ be $\abs{\K} = \sum_{e \in \K}\ell_e$. For $a >0$, define $\tilde{\varphi}_{a}^1: \G \to \mathbb{C}$ by
$$
    \tilde{\varphi}_{a}^1(x)= \begin{cases}1 & \text { for } x \in \K\\ \max\{0,1-ax\} & \text { for } x \in \h_i,\quad i = 1,2,...,N, \end{cases}
$$
$\varphi_{a}^1 := \frac{\tilde{\varphi}_{b}^1}{\norm{\tilde{\varphi}_{b}^1}_{L^2(\G,\C)}}$ and define $\varphi_{a}: \G \to \mathbb{C}^2$ by
$$\varphi_{a} = \binom{\varphi_{a}^1}{0}.$$
Then $\norm{\varphi_a}_2=1$, $\norm{\varphi_a}_{L^2(\K,\C)}^2=\frac{\norm{\tilde{\varphi}^1_a}_{L^2(\K)}^2}{\norm{\tilde{\varphi}^1_a}_{L^2(\G)}^2}=\frac{\abs{\K}}{\frac{N}{3a} + \abs{\K}}$, $\norm{\varphi_a}_{L^p(\K,\C)}=\frac{\norm{\tilde{\varphi}^1_a}_{L^p(\K)}}{\norm{\tilde{\varphi}^1_a}_{L^2(\G)}}=\frac{\abs{\K}^\frac{1}{p}}{\left(\frac{N}{3a} + \abs{\K}\right)^\frac{1}{2}}$, and $b_a:=\norm{\varphi_a^\prime}_{2}^2=\norm{\left(\varphi^1_a\right)^\prime}_{L^2(\G)}^2=\frac{\norm{\left(\tilde{\varphi}^1_a\right)^\prime}_{L^2(\G)}^2}{\norm{\tilde{\varphi}^1_a}_{L^2(\G)}^2}=\frac{Na}{\frac{N}{3a} + \abs{\K}}$. By repeating the argument of \eqref{eqphi+h} and replacing  $I_{e_0}$ with $\K$, we deduce that
$$
 \int_{\K}\left\lvert\frac{\varphi^+_{a,c}}{\norm{\varphi^+_{a,c}}_2} + h\left(\frac{\varphi^+_{a,c}}{\norm{\varphi^+_{a,c}}_2}\right)\right\rvert^{p}\,dx \geq \abs{\K}^{1-\frac{p}{2}}\norm{\varphi_a}_{L^2(\K,\C)}^p +o(1)=\frac{\abs{\K}}{\left(\frac{N}{3a} + \abs{\K}\right)^\frac{p}{2}} +o(1),
$$
as $c \to +\infty$, where $\varphi^+_{a,c}=P^+_c\varphi_a$. By repeating the arguments of \eqref{eqjsmallm} and \eqref{eqec}, we conclude
$$
J_c\left(\frac{\varphi^+_c}{\norm{\varphi^+_c}_2}\right) \leq \frac{mc^2}{2} + \frac{b_a}{4m}-\frac{\abs{\K}}{\left(\frac{N}{3a} + \abs{\K}\right)^\frac{p}{2}}+o(1)\blue{,} \quad \text{as }c \to +\infty,
$$
and 
$$
\begin{aligned}
     e_c \leq J_c\left(\frac{\varphi^+_c}{\norm{\varphi^+_c}_2}\right)&\leq \frac{mc^2}{2} + \frac{b_a}{4m}-\frac{\abs{\K}}{\left(\frac{N}{3a} + \abs{\K}\right)^\frac{p}{2}}+o(1)\\
     &= \frac{mc^2}{2} + \frac{Na}{4m\left(\frac{N}{3a} + \abs{\K}\right)}-\frac{\abs{\K}}{\left(\frac{N}{3a} + \abs{\K}\right)^\frac{p}{2}}+o(1)\blue{,}\quad \text{as }c \to +\infty.
\end{aligned}
$$
Since $2<p<4$, we know $\frac{Na}{4m\left(\frac{N}{3a} + \abs{\K}\right)}-\frac{\abs{\K}}{\left(\frac{N}{3a} + \abs{\K}\right)^\frac{p}{2}}<0$ for some $a>0$ small enough. Thus, there exists a constant $C_0(m,p,\G)>0$ depending only on $m,p$ and $\G$ such that $e_c\leq mc^2-C_0(m,p,\G) +o(1)$ as $c \to +\infty$.
\end{proof}
Next, we prove that, if for all sufficiently large $c>0$, there exists a normalized solution $(u_c,\omega_c) \in \operatorname{dom}(\D_c)\times[0,mc^2-\sigma]$ of \specialrefc{NLDE}{eqnlde} with $I_{0,c}(u) < \frac{mc^2}{2}$, then $(u_c)$ are uniformly bounded in $H^1(\G,\C)$ and $mc^2-\omega_c$ is bounded from above as $c \to +\infty$.
\begin{lemma}\label{lembounded}
      Let $2<p<6$. For any $\sigma>0$, if $mc^2>\sigma$ and $(u_c,\omega_c) \in \operatorname{dom}(\D_c)\times[0,mc^2-\sigma]$ is a normalized solution of \specialrefc{NLDE}{eqnlde} such that $I_{0,c}(u) < \frac{mc^2}{2}$, then 
      $$\norm{u_c}_{H^1} < C(\sigma,m,p,\G),$$ 
      where $C(\sigma, m,p,\G)> 0$ is a constant that denpends only on $\sigma,m,p$ and $\G$. Moreover, if for all sufficiently large $c>0$, there exists a normalized solution $(u_c,\omega_c) \in \operatorname{dom}(\D_c)\times[0,mc^2-\sigma]$ of \specialrefc{NLDE}{eqnlde} with $I_{0,c}(u) < \frac{mc^2}{2}$, then
      $$
      \omega_c\geq mc^2 - 2 S_{p,\K}C(\sigma,m,p,\G)^\frac{p-2}{2} + o(1), \quad \text{as }c \to +\infty.
      $$
\end{lemma}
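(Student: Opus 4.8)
The plan is to split the argument into two parts as the statement suggests. For the first part, I would start from the fact that $(u_c,\omega_c)$ being a normalized solution means $u_c$ is a critical point of $I_{0,c}$ constrained to $S_c(1)$ with multiplier $\omega_c$, so the Nehari-type identities hold: testing the equation against $u_c$ gives $\norm{(u_c)^+_c}^2_c - \norm{(u_c)^-_c}^2_c - \omega_c = \int_\K\abs{u_c}^p\,dx$, while the energy bound reads $\frac12(\norm{(u_c)^+_c}^2_c-\norm{(u_c)^-_c}^2_c) - \frac{\omega_c}{2} - \frac1p\int_\K\abs{u_c}^p\,dx = I_{0,c}(u_c) < \frac{mc^2}{2}$. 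Combining these two relations to eliminate $\int_\K\abs{u_c}^p\,dx$ yields, after multiplying appropriately, a bound of the form $(\frac12-\frac1p)(\norm{(u_c)^+_c}^2_c - \norm{(u_c)^-_c}^2_c - \omega_c) < \frac{mc^2}{2} - \frac{\omega_c}{2} + (\text{terms in }\omega_c)$; since $\omega_c \le mc^2-\sigma$ and $\omega_c \ge 0$, and since $\norm{(u_c)^-_c}^2_c \ge mc^2\norm{(u_c)^-_c}^2_2$ while $\norm{(u_c)^+_c}^2_c - mc^2\norm{(u_c)^+_c}^2_2 \ge 0$, one extracts a bound on $\norm{(u_c)^+_c}^2_c - mc^2\norm{(u_c)^+_c}^2_2 + (\text{positive})$ that is independent of $c$. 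Using $\norm{u_c}^2_2=1$ this controls $\norm{D_cu_c}^2_2 - m^2c^4 = c^2\norm{u_c'}^2_2$ up to a $c$-independent constant, i.e. $\norm{u_c'}^2_2$ is bounded uniformly; together with $\norm{u_c}_2=1$ this gives $\norm{u_c}_{H^1} < C(\sigma,m,p,\G)$.

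For the second part, I would use the Nehari identity again together with the $H^1$-bound just obtained. From $\omega_c = \norm{(u_c)^+_c}^2_c - \norm{(u_c)^-_c}^2_c - \int_\K\abs{u_c}^p\,dx$ and $\norm{(u_c)^+_c}^2_c - \norm{(u_c)^-_c}^2_c = \mathcal{Q}_{D_c}(u_c) \ge$ (something comparable to $mc^2$ after subtracting off the curvature term), I need a lower bound on $\omega_c$. The cleanest route: $\omega_c \ge mc^2\norm{(u_c)^+_c}^2_2 + mc^2\norm{(u_c)^-_c}^2_2 - \int_\K\abs{u_c}^p\,dx + (\text{nonnegative from }\norm{\cdot}^2_c - mc^2\norm{\cdot}^2_2 \text{ on } Y^+_c) \ge mc^2 - \int_\K\abs{u_c}^p\,dx$ (using $\norm{u_c}^2_2=1$ and that the negative part costs nothing here because both spectral components contribute $mc^2\norm{\cdot}^2_2$ with the correct sign in the sum — I would need to double-check the sign bookkeeping on $L^-_c$, where $\norm{(u_c)^-_c}^2_c \ge mc^2\norm{(u_c)^-_c}^2_2$ works in our favor). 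Then I bound $\int_\K\abs{u_c}^p\,dx$ from above: by Lemma \ref{lemgnsgh1} applied to $u_c \in H^1(\G,\C^2)$ and then the $H^1$-bound, $\int_\K\abs{u_c}^p\,dx \le S_{p,\K}\norm{u_c}_{H^1}^{p/2-1}\norm{u_c}_2^{p/2+1} \le S_{p,\K}C(\sigma,m,p,\G)^{(p-2)/2}$; actually, to match the stated constant $2S_{p,\K}C(\sigma,m,p,\G)^{(p-2)/2}$ I would instead split $\int_\K\abs{u_c}^p \le \int_\K\abs{(u_c)^+_c}^p + \int_\K\abs{(u_c)^-_c}^p$ up to a binomial factor, apply Lemma \ref{lemupb} to each, and use that $\norm{(u_c)^\pm_c}_2 \le \norm{u_c}_2 = 1$ together with the negative part being $o(1)$ — the factor $2$ absorbs the combinatorial constant and the $(mc)^{p/2-1}$ is cancelled against the $c^2$ gained, or more simply the term with $(u_c)^-_c$ vanishes as $c\to\infty$ leaving the clean $o(1)$. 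This gives $\omega_c \ge mc^2 - 2S_{p,\K}C(\sigma,m,p,\G)^{(p-2)/2} + o(1)$.

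The main obstacle I anticipate is the sign/bookkeeping in the second part: making the passage from $\omega_c = \mathcal{Q}_{D_c}(u_c) - \int_\K\abs{u_c}^p\,dx$ to a genuine lower bound $\omega_c \ge mc^2 - (\text{bounded})$ rigorous requires carefully using that on $Y^+_c$ one has $\norm{v}^2_c \ge mc^2\norm{v}^2_2$ and on $Y^-_c$ one has $\norm{v}^2_c \ge mc^2\norm{v}^2_2$ as well (both from \eqref{eqsp}), so that $\mathcal{Q}_{D_c}(u_c) = \norm{(u_c)^+_c}^2_c - \norm{(u_c)^-_c}^2_c$ is \emph{not} obviously $\ge mc^2$ — the negative part enters with a minus sign. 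The resolution is that $\norm{(u_c)^-_c}^2_2 = o(1)$ as $c\to\infty$ (this is essentially \eqref{eqvarphi+c2}-type control, or follows from $\norm{(u_c)^-_c}^2_c \le$ bounded combined with $\norm{(u_c)^-_c}^2_c \ge mc^2\norm{(u_c)^-_c}^2_2$), so $\norm{(u_c)^+_c}^2_2 = 1 - o(1)$ and $\mathcal{Q}_{D_c}(u_c) \ge mc^2\norm{(u_c)^+_c}^2_2 - \norm{(u_c)^-_c}^2_c \ge mc^2(1-o(1)) - (\text{bounded}) = mc^2 + O(1)$; tracking that this $O(1)$ together with the $L^p$ bound gives precisely the claimed constant $2S_{p,\K}C(\sigma,m,p,\G)^{(p-2)/2}$ is the delicate accounting step. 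Everything else is a routine combination of the Nehari identity, the energy bound $I_{0,c}(u_c) < mc^2/2$, and Lemmas \ref{lemgnsgh1} and \ref{lemupb}.
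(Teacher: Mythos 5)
Your plan for the first part has a genuine gap at its central step. The energy bound $I_{0,c}(u_c)<\frac{mc^2}{2}$ and the Lagrange/Nehari identity $\langle I_{0,c}'(u_c),u_c\rangle=\omega_c$ only give you control of the quadratic form $\mathcal{Q}_{\D_c}(u_c)=\norm{(u_c)^+_c}^2_c-\norm{(u_c)^-_c}^2_c=\int\nu\,d\mu_{u_c}^{\D_c}(\nu)$ and of $\int_\K\abs{u_c}^p\,dx$; these are $H^{1/2}$-level quantities (first moments of the spectral measure). The quantity you actually need, $c^2\norm{u_c'}_2^2=\norm{\D_cu_c}_2^2-m^2c^4$, is the second moment $\int\nu^2\,d\mu_{u_c}^{\D_c}(\nu)-m^2c^4$, and a bound on $\int(\abs{\nu}-mc^2)\,d\mu$ does not bound $\int(\nu^2-m^2c^4)\,d\mu=\int(\abs{\nu}-mc^2)(\abs{\nu}+mc^2)\,d\mu$: spectral mass escaping to large $\abs{\nu}$ makes the second moment blow up while the first stays bounded. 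So the sentence ``this controls $\norm{\D_cu_c}^2_2-m^2c^4$'' does not follow. (In addition, the form quantity you propose to bound, $\norm{(u_c)^+_c}^2_c-mc^2\norm{(u_c)^+_c}^2_2$, comes out of your elimination of size $O(mc^2-\omega_c)$, which at this stage is only known to be $O(mc^2)$ since the sole a priori lower bound is $\omega_c\ge0$; getting a $c$-independent bound there would already require the second part of the lemma, which is circular.)

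The paper fills exactly this gap by using the equation itself rather than only its variational consequences: since $\D_cu_c=\omega_cu_c+\chi_\K\abs{u_c}^{p-2}u_c$ pointwise, taking $L^2$ norms and invoking \eqref{eqdcu} gives
$$c^2\norm{u_c'}_2^2+m^2c^4=\norm{\D_cu_c}_2^2=\omega_c^2+2\omega_c\int_\K\abs{u_c}^p\,dx+\int_\K\abs{u_c}^{2p-2}\,dx,$$
which is the only place $\norm{u_c'}_2$ becomes accessible. Combining this with $m^2c^4-\omega_c^2\ge mc^2\sigma$, the bound $\int_\K\abs{u_c}^p\,dx<\frac{p}{p-2}\,mc^2$ coming from the energy identity, and Lemma \ref{lemgnsgh1} yields $\min\{1,m\sigma\}\norm{u_c}_{H^1}^2\le C'\norm{u_c}_{H^1}^{(p-2)/2}$, which closes because $(p-2)/2<2$. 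The same identity gives the second part at once: $m^2c^4-\omega_c^2\le 2\omega_cS_{p,\K}C^{(p-2)/2}+S_{2p-2,\K}C^{p-2}$, hence $\omega_c\ge m^2c^4/\omega_c-2S_{p,\K}C^{(p-2)/2}+o(1)\ge mc^2-2S_{p,\K}C^{(p-2)/2}+o(1)$ using $\omega_c\le mc^2$. Your part 2, by contrast, routes everything through the spectral decomposition and leaves the key bookkeeping (a quantitative bound on $\norm{(u_c)^-_c}_c^2$ and the origin of the precise constant) unresolved; I would abandon the $\pm$ decomposition here entirely and work with $\norm{\D_cu_c}_2^2$.
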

\begin{proof}
It follows from $\langle I_{0,c}^{\prime}(u_{c}), u_{c}\rangle =\omega_c\norm{u_c}_2^2$ and $I_{0,c}(u) < \frac{mc^2}{2}$ that 
\begin{equation}\label{equcp}
    \left(\frac{1}{2}-\frac{1}{p}\right)\int_{\K}\abs{u_{c}}^p\, dx = I_{0,c}(u) -\frac{1}{2}\langle I_{0,c}^{\prime}(u_{c}), u_{c}\rangle <\frac{mc^2}{2}-\frac{\omega_c}{2}\norm{u_c}_2^2.
\end{equation}
By \eqref{eqdcu}, we obtain
\begin{equation}\label{eqc2u'}
c^2\norm{u'_c}_2^2+m^2c^4\norm{u_c}_2^2= \norm{\D_cu_c}_2^2= \omega_c^2\norm{u_c}_2^2+2\omega_c\int_{\K} \abs{u_c}^{p} \,dx+ \int_{\K} \abs{u_c}^{2p-2} \,dx.
\end{equation}
Then, by  $m^2c^4-\omega_c^2 \geq 2mc^2\sigma-\sigma^2>mc^2\sigma>0$, \eqref{equcp} and Lemma \ref{lemgnsgh1}, one has
\begin{equation*}
    \begin{aligned}
    c^2\norm{u'_c}_2^2+mc^2\sigma\norm{u_c}_2^2&<  2\omega_c\int_{\K} \abs{u_c}^{p} \,dx+ \int_{\K} \abs{u_c}^{2p-2} \,dx\\
&\leq  2\omega_c S_{p,\K} \norm{u_c}_{H^1}^\frac{p-2}{2}\norm{u_c}_2^\frac{p+2}{2} + S_{\infty,\K}^{p-2} \norm{u_c}_{H^1}^\frac{p-2}{2}\norm{u_c}_2^\frac{p-2}{2}\int_{\K} \abs{u_c}^{p} \,dx\\
&< 2\omega_c S_{p,\K} \norm{u_c}_{H^1}^\frac{p-2}{2}+ \frac{2pmc^2}{p-2}S_{\infty,\K}^{p-2} \norm{u_c}_{H^1}^\frac{p-2}{2},
\end{aligned}
\end{equation*}
that is,
$$
\min\{1,m\sigma\} \norm{u}^2_{H^1} < \left(\frac{2\omega_c S_{p,\K}}{c^2}+\frac{2pmS_{\infty,\K}^{p-2}}{p-2}\right)\norm{u_c}_{H^1}^\frac{p-2}{2}<\left(2m S_{p,\K}+\frac{2pmS_{\infty,\K}^{p-2}}{p-2}\right)\norm{u_c}_{H^1}^\frac{p-2}{2},
$$
whence, it follows from $2<p<6$ that $\norm{u_c}_{H^1} < C(\sigma,m,p,\G)$, where $$C(\sigma,m,p,\G) := \max\{m^\frac{2}{6-p},\sigma^\frac{2}{p-6}\}\left(2 S_{p,\K}+\frac{2pS_{\infty,\K}^{p-2}}{p-2}\right)^\frac{2}{6-p}.$$

Next, by \eqref{eqc2u'} and Lemma \ref{lemgnsgh1}, we have
$$
\begin{aligned}
    m^2c^4-\omega_c^2=(m^2c^4-\omega_c^2)\norm{u_c}_2^2&\leq c^2\norm{u'_c}_2^2+(m^2c^4-\omega_c^2)\norm{u_c}_2^2\\
    &=  2\omega_c\int_{\K} \abs{u_c}^{p} \,dx+ \int_{\K} \abs{u_c}^{2p-2} \,dx\\
    &\leq 2\omega_c S_{p,\K} \norm{u_c}_{H^1}^\frac{p-2}{2}\norm{u_c}_2^\frac{p+2}{2} + S_{2p-2,\K}\norm{u_c}_{H^1}^{p-2}\norm{u_c}_2^p\\
    &\leq 2\omega_c S_{p,\K}C(\sigma,m,p,\G)^\frac{p-2}{2} + S_{2p-2,\K}C(\sigma,m,p,\G)^{p-2},
\end{aligned}
$$
that is 
$$
\omega_c^2+2\omega_c S_{p,\K}C(\sigma,m,p,\G)^\frac{p-2}{2} + S_{2p-2,\K}C(\sigma,m,p,\G)^{p-2}\geq m^2c^4.
$$
Therefore, $\omega_c \to +\infty$ as $c \to +\infty$, and thus
$$\begin{aligned}
\omega_c &\geq \frac{m^2c^4}{\omega_c} -2 S_{p,\K}C(\sigma,m,p,\G)^\frac{p-2}{2}-\frac{1}{\omega_c}S_{2p-2,\K}C(\sigma,m,p,\G)^{p-2}\\
& \geq mc^2-2 S_{p,\K}C(\sigma,m,p,\G)^\frac{p-2}{2} + o(1),\\
\end{aligned}
$$
as $c \to +\infty$.
\end{proof}
Now, we provide the proof of Theorem \ref{th2}.
\begin{proof}[Proof of Theorem \ref{th2}]
    It follows from Lemma \ref{lemcin} that $e_c\leq mc^2-C_0(m,p,\G) +o(1)$ as $c \to +\infty$ for all $2<p<4$ and $e_c \leq \frac{mc^2}{2} + \frac{b}{4m}-\frac{1}{p}\ell_{e_0}^{1-\frac{p}{2}} + o(1)$ as $c \to +\infty$ for all $4\leq p<6$. Since $m >m_0(p,\ell_{e_0})$, we know that, there exist $c_1>0$ and $\sigma>0$ depending only on $m,p$ and $\G$ such that, for all $c>c_1$, we have $e_c\leq \frac{mc^2-\sigma}{2}$. By Lemma \ref{lemma26}, for any $c>c_1$, there exists $u_c \in \operatorname{dom}(\D_c)$ such that $I_{0,c}(u)\leq e_c<mc^2$, and
     \begin{enumerate}[label=\rm(\roman*)]
        \item either $u_c$ is a critical point of $I_{0,c}$ constrained on $S_c(1)$ with Lagrange multiplier $\omega \in [0,2e_c]$.
\item  or $u_c$ is a critical point of $I_{0,c}$ constrained on $S_c(\nu)$ for some $0<\nu < 1$ with Lagrange multiplier $\omega =0$.
\end{enumerate}
By \cite[Lemma A.2]{HJ}, we know that, there exists $c_0\geq c_1$ depending only on $m,p$ and $\G$ such that, for all $c>c_0$, case (ii) above cannot occur. Thus, for any $c>c_0$, there exists a non-trivial $u_c \in \operatorname{dom}(\D_c)$ and $\omega_c \in [0, mc^2)$ such that $(u_c,\omega_c)$ is a normalized solution of \specialrefc{NLDE}{eqnlde} and $\omega_c \leq mc^2-\sigma$. By Lemma \ref{lembounded}, we complete the proof of Theorem \ref{th2}.
\end{proof}
In a view of \cite[Section 4.2]{Bo}, to prove Theorem \ref{th3}, it suffices to prove that, the solutions $(u_c,\omega_c)$ of \specialrefc{NLDE}{eqnlde}, obtained in the proof of Theorem \ref{th2}, satisfy $(u_c)$ are uniformly bounded in $H^1(\G,\C)$ as $c \to +\infty$, and this is guaranteed by Lemma \ref{lembounded}.
Now, we provide the proof of Theorem \ref{th3}.
\begin{proof}[Proof of Theorem \ref{th3}]
    In the proof of Theorem \ref{th2}, for $m > m_0(p,\ell_{e_0})$ and $c>c_0$, we obtain a sequence of solutions $((u_c,\omega_c))$ of \specialrefc{NLDE}{eqnlde} with $I_{0,c}(u)< \frac{mc^2}{2}$ and 
    $$    -\infty<\liminf_{c \to +\infty}(\omega_c -mc^2) \leq \limsup_{c \to +\infty}(\omega_c -mc^2)<0.
$$
    Then, by Lemma \ref{lembounded}, we know that $(u_c)$ is bounded in $H^1(\G,\C)$. Let $(c_n)$ be a real sequence satisfying $0 < c_n \to +\infty$ as $n \to +\infty$. Then, up to a subsequence, there is $\lambda<0$ such that $\omega_{c_n}-mc_n^2 \to \frac{\lambda}{m}$ as $n \to \infty$. By repeating the argument in \cite[Section 4.2]{Bo}, we conclude that $u_{c_n}^1 \to g$ and $u_{c_n}^2 \to 0$ in $H^1(\G,\C)$ as $n \to \infty$, where $g \in \tilde{H}^1(\G,\C)$ is a solution of the following nonlinear Schr\"odinger equation
    $$
     -g'' -\lambda g= 2m\chi_\K\abs{g}^{p-2}g \quad  \text{on every } e \in \mathrm{E}.
    $$
    Then, $u_{c_n} \to (g,0)^T$ in $H^1(\G,\C^2)$ as $n \to \infty$, hence $\int_{\G}\abs{g}\, dx=1$, which completes the proof of Theorem \ref{th3}.
    
\end{proof}
\section{Further remarks}\label{sectfr}
In this section, we first show that the existence result of normalized solution to \eqref{eqnlse} on $\G$ established in Theorem \ref{th3} is compatible with the related studies, and we then provide a possibility to prove $(g,\lambda)$, the normalized solution of \eqref{eqnlse} on $\G$ obtained in Theorem \ref{th3}, is a ground state normalized solution of \eqref{eqnlse} on $\G$.

Define $$S_\mu:=\left\{u \in\tilde{H}(\G,\C):\norm{u}^2_{L^2(\G,\C)}=\mu\right\},$$
$$E_m(u):=\frac{1}{2}\norm{u'}^2_2 -\frac{2m}{p}\int_\K \abs{u}^p\,dx, \quad \forall u \in \tilde{H}^1(\G,\C),$$
and
$$
\mathcal{E}^\mu_m:=\inf_{u\in S_\mu}E_m(u).
$$
Since
$$
E_m(u)=\frac{(2m)^{\frac{-2}{p-2}}}{2}\left\lVert(2m)^{\frac{1}{p-2}}u'\right\rVert^2_2 -\frac{(2m)^{\frac{-2}{p-2}}}{p}\int_\K \abs{(2m)^{\frac{1}{p-2}}u}^p\,dx=(2m)^{\frac{-2}{p-2}}E_1\left((2m)^{\frac{1}{p-2}}u\right),
$$
we know that the minimization problem $\mathcal{E}_m:=\mathcal{E}_m^1$ has a solution if and only if $\mathcal{E}_1^{\mu_m}$ has a solution, where $\mu_m=(2m)^{\frac{2}{p-2}}$. Then, results in \cite{Se,Se1,Te} can be easily extend to the problem $\mathcal{E}_m$ with little modification.
\\
\\
\\
\noindent \textbf{1.} Let $4\leq p < 6$. For $\varphi^1$ defined as in Lemma \ref{lemcin}, we have
$E_m(\varphi^1) = \frac{1}{2}(\frac{\pi}{\ell_{e_0}})^2-\frac{2m}{p}\int_\K \abs{\varphi^1}^p\,dx \leq \frac{1}{2}(\frac{\pi}{\ell_{e_0}})^2-\frac{2m}{p}\ell_{e_0}^{1-\frac{p}{2}}$. Thus, $\mathcal{E}_m<0$ if $m>m_0(p,\ell_{e_0})$. Then, it follows from \cite[Theorem 3.1]{Te} that $\mathcal{E}_m$ has a solution if $m>m_0(p,\ell_{e_0})$. Therefore, the existence of solutions to \eqref{eqnlse} in Theorem \ref{th3} is compatible with the existence result \cite[Theorem 1.1]{Se1} and the nonexistence result \cite[Theorem 3.2]{Se1}.
\\
\\
\\
\noindent \textbf{2.} If  we can replace \eqref{eqphi+h} by 
$$
\int_{\K}\left\lvert\frac{\varphi^+_c}{\norm{\varphi^+_c}_2} + h_c\left(\frac{\varphi^+_c}{\norm{\varphi^+_c}_2}\right)\right\rvert^{p}\,dx \geq \int_{\K}\left\lvert\frac{\varphi^+_c}{\norm{\varphi^+_c}_2}\right\rvert^{p}\,dx +o(1),
$$
then it is possible to show $(g,\lambda) \in \tilde{H}(\G,\C) \times \R$ obtained in Theorem \ref{th3} is a ground state normalized solution of \eqref{eqnlse} by repeating the argument in \cite[Lemma 4.4]{Chen}, $e_c \leq \frac{mc^2}{2}+\frac{\mathcal{E}_m}{2m}+o(1)$ as $c \to +\infty$, and Theorems \ref{th2} and \ref{th3} hold when $\mathcal{E}_m<0$ ( $\mathcal{E}_m<0$ holds for all $2<p<4$ with $m>0$ or $4 \leq p<6$ with $m>0$ large enough, see \cite{Te}). 

To this end, we know that $\frac{\varphi^+_c}{\norm{\varphi^+_c}_2} + h_c\left(\frac{\varphi^+_c}{\norm{\varphi^+_c}_2}\right) \in \operatorname{dom}(\D_c)$ by the self-jointness of $\D_c$, but it seems difficult to give an upper bound to $H^1$-norms of $\frac{\varphi^+_c}{\norm{\varphi^+_c}_2} + h_c\left(\frac{\varphi^+_c}{\norm{\varphi^+_c}_2}\right)$. An alternative method is to prove 
$$
\min\{mc^2,c\} \norm{u}_{H^\frac{1}{2}(\G,\C^2)}^2 \leq \norm{u}_c^2,
$$
however, in $\R^3$, this inequality is proved by Fourier transform, which is absent on metric graphs, so it remains unclear whether the inequality holds on metric graphs, where only real interpolation theory or spectrum theory in \cite{Bo} are available.

\subsection*{Conflict of interest}

The authors declare no conflict of interest.

\subsection*{Ethics approval}
 Not applicable.

\subsection*{Data Availability Statements}
Data sharing not applicable to this article as no datasets were generated or analysed during the current study.

\subsection*{Acknowledgements}
 C. Ji was partially supported by National Natural Science Foundation of China (No. 12571117).

  \end{document}